\documentclass[a4paper,11pt]{article}
\pdfoutput=1

\usepackage[a4paper,tmargin=3truecm,bmargin=3truecm,rmargin=2.5truecm,
lmargin=2.5truecm,twoside,verbose=true]{geometry}
\usepackage{cancel,graphicx}

\usepackage{amsmath,amssymb}
\usepackage[amsmath, hyperref, thmmarks]{ntheorem}
\usepackage[all]{xypic}
\usepackage[pdftex]{hyperref}
\usepackage[english]{babel}

\numberwithin{equation}{section}

\allowdisplaybreaks[1]

\def\question#1{}


\newcommand\caB{{\mathcal B}}
\newcommand\caC{{\mathcal C}}

\newcommand\caL{{\mathcal L}}

\newcommand\caS{{\mathcal S}}

\newcommand\caU{{\mathcal U}}

\newcommand\gone{{ \mathchoice {1\mskip-4mu\mathrm{l} } {1\mskip-4mu\mathrm{l} }{1\mskip-4.5mu\mathrm{l} } {1\mskip-5mu\mathrm{l}} }}
\newcommand\gR{{\mathbb R}}

\newcommand\gT{{\mathbb T}}
\newcommand\gC{{\mathbb C}}

\newcommand\gB{{\mathbb B}}
\newcommand\gN{{\mathbb N}}
\newcommand\gZ{{\mathbb Z}}

\newcommand\superA{{\mathcal A}}
\newcommand\algA{{\mathbf A}}

\newcommand\algB{{\mathbf B}}

\newcommand\kg{{\mathfrak g}}

\newcommand\eps{{\varepsilon}}

\newcommand\fois{\mathord{\cdot}}

\newcommand\dd{{\text{\textup{d}}}}


\newcommand\norm{\mathord{\parallel}}

\theoremsymbol{}
\theorembodyfont{\slshape}
\theoremheaderfont{\normalfont\bfseries}
\theoremseparator{}
\newtheorem{Theorem}{Theorem}[section]
\newtheorem{theorem}[Theorem]{Theorem}

\newtheorem{proposition}[Theorem]{Proposition}

\newtheorem{lemma}[Theorem]{Lemma}

\theorembodyfont{\upshape}
\theoremsymbol{\ensuremath{\blacklozenge}}

\newtheorem{example}[Theorem]{Example}

\newtheorem{remark}[Theorem]{Remark}

\newtheorem{definition}[Theorem]{Definition}

\theoremstyle{nonumberplain}
\theoremheaderfont{\scshape}
\theorembodyfont{\normalfont}
\theoremsymbol{\ensuremath{\blacksquare}}

\newtheorem{proof}{Proof}
\qedsymbol{\ensuremath{_\blacksquare}}
\theoremclass{LaTeX}



\renewenvironment{thebibliography}[1]
         {\section*{References}\frenchspacing\small
          \begin{list}{[\arabic{enumi}]}
         {\usecounter{enumi}\parsep=2pt\topsep 0pt
         \settowidth{\labelwidth}{[#1]}
         \leftmargin=\labelwidth\advance\leftmargin\labelsep
         \rightmargin=0pt\itemsep=1pt\sloppy}}{\end{list}}


\title{Fr\'echet Quantum Supergroups\footnote{Work
supported by the Belgian Interuniversity Attraction Pole (IAP) within the framework ``Dynamics, Geometry and Statistical Physics'' (DYGEST).}}
\date{}
\author{Axel de Goursac}

\begin{document}

\maketitle
\vspace*{-1cm}
\begin{center}
\textit{Charg\'e de Recherches au FRS-FNRS,\\ IRMP, Universit\'e Catholique de Louvain,\\ Chemin du Cyclotron 2, 1348 Louvain-la-Neuve, Belgium\\
e-mail: \texttt{axelmg@melix.net}}\\
\end{center}%

\vskip 2cm

\begin{abstract}
In this paper, we introduce Fr\'echet quantum supergroups and their representations. By using the universal deformation formula of the abelian supergroups $\gR^{m|n}$ we construct various classes of Fr\'echet quantum supergroups that are deformation of classical ones. For such quantum supergroups, we find an analog of Kac-Takesaki operators that are superunitary and satisfy the pentagonal relation.
\end{abstract}

\vskip 5cm

\noindent{\it Keywords:} Hopf algebra; quantum group; noncommutative supergeometry; Fr\'echet spaces; deformation quantization; multiplicative unitary
\vskip 0.2cm
\noindent{\it Mathematics Subject Classification:} 16T05; 46E10; 46L65; 58A50
\vskip 1cm

\pagebreak

\section{Introduction}

Noncommutative geometry \cite{Connes:1994} is a vibrant field of mathematics whose essential principle lies in the duality between spaces and commutative algebras, so that the properties of spaces can be algebraically characterized. Then, a noncommutative algebra can be seen as corresponding to some ``noncommutative space''. This very rich way of thinking allows generalizing classical notions and theorems of usual geometry, and it is sometimes possible to prove new results for differential geometry in this more general noncommutative framework (for instance the classification of foliations of the torus \cite{Rieffel:1981}). In this point of view, the noncommutative analogs of groups are quantum groups \cite{Woronowicz:1987,Majid:1995}.

As productive examples of noncommutative algebras, deformation quantization \cite{Bayen:1978} consists in introducing a deformed product on the space of smooth functions $\caC^\infty(M)$ on a Poisson manifold $M$. This product depends on a deformation parameter $\theta$ so that $\theta=0$ yields the usual commutative product on $\caC^\infty(M)$. There is thus possibility of studying deformations with a formal deformation parameter (see in particular \cite{Kontsevich:2003}) or a non-formal one ($\theta\in\gR$).

In the case of a symplectic Lie group $G$, to any left-invariant formal deformation on $\caC^\infty(G)$ is associated a Drinfeld twist \cite{Drinfeld:1989} on the universal enveloping Hopf algebra $\caU(\kg)$ of the Lie algebra of $G$. Then, such a twist $F\in\caU(\kg)\otimes\caU(\kg)[[\theta]]$ deforms also any $\caU(\kg)$-module-algebra $\algA$; this is called a universal deformation formula (UDF). The external symmetries of the UDF correspond thus to the twisted Hopf algebra on which the deformation of the algebras $\algA$ are module-algebras (see \cite{Giaquinto:1998}).

For non-formal deformation quantization of Lie groups in the smooth setting, there are only few available examples. Rieffel \cite{Rieffel:1989} built the deformation of Abelian groups and the associated UDF. This was also recently extended to non-Abelian K\"ahlerian Lie groups \cite{Bieliavsky:2010kg,Bieliavsky:2013sk}.

\medskip

Coming from another direction, supergeometry \cite{Kostant:1977,Tuynman:2005} is a mathematical theory in which the objects are supermanifolds involving, besides the usual commuting coordinates, also anticommuting coordinates (Grassmann variables). The algebra of smooth functions of a supermanifold is then $\gZ_2$-graded commutative. Supergeometry was applied to various domains of mathematics and in physics.

It is then natural to ask whether a noncommutative supergeometry corresponding to noncommutative geometry with $\gZ_2$-grading does exist and possess nice properties. Noncommutative algebraic geometry developed fruitfully this graded approach with projective schemes \cite{Artin:1994}. A work in the direction of noncommutative Q-manifolds was also achieved in \cite{Schwarz:2000}. In \cite{deGoursac:2008bd} we built some geometric tools such as noncommutative differential calculi, connections, for algebras with more general grading and interpreted as ``noncommutative graded spaces''. More recently, we constructed a non-formal deformation quantization of Abelian Lie supergroups in \cite{Bieliavsky:2010su}. It was initially motivated by physics since a renormalizable scalar quantum field theory on the Moyal space can be interpreted with the star-product of the superspace $\gR^{m|1}$ (see \cite{deGoursac:2010zb,Bieliavsky:2010su}), as well as its associated gauge theory \cite{deGoursac:2008bd,deGoursac:2007gq}. In this deformation, we had to introduce the notion of C*-superalgebra in order to implement the UDF associated to the Heisenberg supergroup. This notion has nice properties and should be the natural object of noncommutative supergeometry at the topological level.

\medskip

The corresponding notion of quantum group in noncommutative supergeometry should be called ``quantum supergroup''. Some algebraic definitions of quantum supergroups were already given (see e.g. \cite{Majid:1995}). In this paper, we introduce this notion in the context of topological Hopf superalgebras.

To this aim, we first look at the external symmetries of the UDF associated to the deformation of the Abelian Lie supergroups. We indeed find a non-nuclear Fr\'echet-Hopf superalgebra $H$ whose comodule-algebras are deformed by the twist of the UDF and which corresponds to the external symmetries.

As external symmetries form quantum groups in general, properties of $H$ lead us to a Fr\'echet definition of quantum supergroups and of their representations. This definition is actually a direct extension of Kostant's definition \cite{Kostant:1977} of supergroups without the supercommutativity condition.

We then study three examples of Fr\'echet quantum supergroups. First, the Clifford algebra that is topologically trivial as finite-dimensional. The second example uses the UDF of the Abelian Lie supergroups to deform a class of solvable (non-nilpotent) Lie supergroups into Fr\'echet quantum supergroups. We introduce an analog of Kac-Takesaki operator for such quantum supergroups and show that it satisfies the pentagonal equation, but it is superunitary and not unitary. Finally, we construct Fr\'echet quantum supergroups with supertoral subgroups and exhibit their multiplicative superunitary operators.

Note that the definition and properties of C*-quantum supergroups are currently under study, but the Fr\'echet framework presented here - even though not nuclear - is much less constrained and could be useful in some cases where the C* notion is not available.

\section{Non-formal deformation of superspaces}

\subsection{Supergeometric setting}
\label{subsec-concrete}

We start with some recalls about the concrete approach of supergeometry developed by DeWitt, Rogers, Tuynman,... (see \cite{DeWitt:1984,Rogers:2007,Tuynman:2005}). The essence of this approach consists of replacing the basis field $\gR$ by a real supercommutative superalgebra $\superA$ in all the geometric constructions.

Let $\superA=\bigwedge V$, where $V$ is a real infinite-dimensional vector space. Then, $\superA=\superA_0\oplus\superA_1$ is a $\gZ_2$-graded commutative algebra with
\begin{equation*}
\forall a,b\in\superA\quad:\quad ab=(-1)^{|a||b|}ba,
\end{equation*}
where $|a|\in\gZ_2$ denotes the degree of the homogeneous element $a$, and the expression is extended by linearity to inhomogeneous elements of $\superA$. Moreover, it satisfies $\superA/\mathcal{N}_{\superA}\simeq \gR$, where $\mathcal{N}_{\superA}$ denotes the ideal of nilpotent elements of $\superA$. We denote by $\gB:\superA\to\gR$ the quotient map by $\mathcal{N}_{\superA}$, and call it the body map. Actually, the explicit form of the algebra $\superA$ is not important here, only its above properties play a role. Moreover, no topology is needed for $\superA$ here, the Fr\'echet topology will appear at the level of the superfunctions on the involved supermanifolds.

\begin{definition}[superspace]
\label{def-concrete-superspace}
The superspace of (graded) dimension $m|n$ is defined as $\gR^{m|n}:=(\superA_0)^m\times(\superA_1)^n$. It involves $m$ even (commuting) coordinates and $n$ odd (anticommuting) coordinates in the canonical basis. The body map can be applied on each even coordinate and is also denoted by $\gB:\gR^{m|n}\to\gR^m$.

Moreover, if $m$ is even, this superspace can be endowed by the even symplectic structure associated to the matrix $\omega=\begin{pmatrix} \omega_0 & 0 \\ 0 & 2\gone \end{pmatrix}$ of size $(m+n)$ in the canonical basis, with $\omega_0=\begin{pmatrix} 0 & \gone \\ -\gone & 0 \end{pmatrix}$ of size $m$.
\end{definition}

The DeWitt topology of $\gR^{m|n}$ can be constructed as follows. A subset $U$ of $\gR^{m|n}$ is called open if $\gB U$ is an open subset of $\gR^m$ and $U=\gB^{-1}(\gB U)$, namely $U$ is saturated with nilpotent elements. It is of course not a Hausdorff topology.

The smooth functions on $\gR^{m|0}=(\superA_0)^m$ can be defined as associated to elements of $\caC^\infty(\gR^m)$.
\begin{definition}
\label{def-superextofsmoothfunc}
To any smooth function $f\in \caC^\infty(\gR^m)$ one can associate the function $\tilde f:\gR^{m|0}\to\superA_0$ defined by: $\forall x\in\gR^{m|0}=(\superA_0)^m$, with $x=x_0+n$, $x_0=\gB(x)\in\gR^m$ and $n\in\gR^{m|0}$ a nilpotent element,
\begin{equation*}
\tilde f(x)=\sum_{\alpha\in\gN^m}\frac{1}{\alpha !}\partial^\alpha f(x_0) n^\alpha,
\end{equation*}
with the usual notations for the multi-index $\alpha$. Note that the sum over $\alpha$ is finite due to the nilpotency of $n$.
\end{definition}

\begin{definition}[smooth superfunctions]
Let $U$ be an open subset of $\gR^{m|n}$. A map $f:U\to\superA$ is said to be smooth on $U$, and written $f\in \caC^\infty(U)$, if there exist unique functions $f_I\in \caC^\infty(\gB U)$ for all ordered subsets $I$ of $\{1,\dots,n\}$, such that $\forall (x,\xi)\in\gR^{m|n}$ ($x\in\gR^{m|0}$ and $\xi\in\gR^{0|n}$),
\begin{equation*}
f(x,\xi)=\sum_{I}\tilde{f_I}(x)\xi^I,
\end{equation*}
where $\xi^I$ denotes the ordered product of the corresponding coefficients. This means that, if $I=\{i_1, \dots, i_k\}$ with $1 < i_1 < \cdots < i_k\le n$, then $\xi^I:=\prod_{i\in I}\xi^i =\xi^{i_1} \xi^{i_2} \cdots \xi^{i_k}$, and we take as a convention: $\xi^\emptyset = 1$. We extend this definition in the usual way to functions with values in a superspace.
\end{definition}
For any two (ordered) subsets $I = \{i_1, \dots, i_l\}$ and $J = \{j_1, \dots, j_\ell\}$ of $\{1,\dots,n\}$ we define $\eps(I,J)$ to be zero if $I$ and $J$ overlap; if $I\cap J=\emptyset$, we set $\eps(I,J)$ to the parity of the list $(i_1, \dots, i_k, j_1, \dots, j_\ell)$, defined as $-1$ raised to the number of transpositions needed to put it in increasing order. This function satisfies
\begin{equation}
\eps(I,J)=(-1)^{|I||J|}\eps(J,I),\qquad \eps(I,J\cup K)=\eps(I,J)\eps(I,K)\text{ if }J\cap K=\emptyset.\label{eq-eps}
\end{equation}
As a consequence, we have: $\xi^I\fois\xi^J=\eps(I,J)\xi^{I\cup J}$. The smooth superfunctions then satisfy $\caC^\infty(\gR^{m|n})\simeq \caC^\infty(\gR^m)\otimes\bigwedge \gR^n$. We recall the Lebesgue-Berezin integration for superfunctions:
\begin{equation*}
\int_{\gR^{m|n}}\dd z\, f(z)=\int_{\gR^m}\dd x\,f_{\{1,\dots,n\}}(x).
\end{equation*}

With this definition of smooth superfunctions and the DeWitt topology, it is possible to define supermanifolds and Lie supergroups (see \cite{DeWitt:1984,Rogers:2007,Tuynman:2005}).
\begin{definition}[supermanifold, Lie supergroup]
\label{def-concrete-superman}
Let $M$ be a topological space.
\begin{itemize}
\item A chart of $M$ is a homeomorphism $\varphi:U\to W$, with $U$ an open subset of $M$ and $W$ an open subset of $\gR^{m|n}$, for $m,n\in\gN$.
\item An atlas of $M$ is a collection of charts $\caS=\{\varphi_i:U_i\to W_i,\, i\in I\}$ where $\bigcup_{i\in I}U_i=M$ and $\forall i,j\in I$, $\varphi_i\circ\varphi_j^{-1}\in \caC^\infty(\varphi_j(U_i\cap U_j),W_i)_0$.
\item If $M$ is endowed with an atlas, we define its body as:
\begin{equation*}
\gB M=\{y\in M,\, \exists \varphi_i/\, y\in U_i\text{ and }\varphi_i(y)\in\gB W_i\},
\end{equation*}
and the body map $\gB:M\to\gB M$ on each subset $U_i$ by: $\gB_{|U_i}=\varphi_i^{-1}\circ \gB\circ \varphi_i$.
\item $M$ is called a supermanifold if it is endowed with an atlas such that $\gB M$ is a real manifold. 
\item Let $M$ be a supermanifold. A function $f$ on $M$ is called smooth, and written $f\in \caC^\infty(M)$, if for any chart $\varphi_i$ in an atlas for $M$, $f\circ\varphi_i^{-1}\in \caC^\infty(W_i)$.
\item A Lie supergroup is a supermanifold $G$ which has a group structure for which the multiplication is a smooth map. Consequently, the identity element of the supergroup has real coordinates (it lies in $\gB G$), and the inverse map is smooth.
\end{itemize}
\end{definition}

The algebra $\caC^\infty(M)$ of smooth superfunctions on a supermanifold $M$ carries a structure of $\gZ_2$-graded Fr\'echet superalgebra for the pointwise product (see Lemma 2.18 of \cite{Bieliavsky:2010su}). A supermanifold $M$ of dimension $m|n$ is called trivial if there exists a supermanifold $M_0$ of dimension $m|0$ such that $M\simeq M_0\times\gR^{0|n}$. Note that $\gB M_0=\gB M$ and that $M_0$ is totally determined by $\gB M$. In particular, it can be showed (see \cite{Tuynman:2005}) that every Lie supergroup has an underlying structure of trivial supermanifold.

Note that the superspace $\gR^{m|n}$ has a structure of Abelian supergroup. Its law can be expressed as
\begin{equation*}
\forall (x,\xi),(y,\eta)\in\gR^{m|n}\quad:\quad (x,\xi)\fois (y,\eta)=(x+y,\xi+\eta).
\end{equation*}

\subsection{The star-product}
\label{subsec-prod}

The construction of the deformation quantization of the symplectic superspace $\gR^{m|n}$ (see Definition \ref{def-concrete-superspace}) has been performed in \cite{Bieliavsky:2010su} if $m$ is an even integer. Let us recall here the corresponding $\gR^{m|n}$-invariant star-product. Its expression is given by the von Neumann formula extended to the graded setting: for $x\in \gR^{m|0}$, $\xi\in\gR^{0|n}$ (we write $(x,\xi)\in\gR^{m|n}$),
\begin{multline}
(f_1\star f_2)(x,\xi)=\kappa\int\dd x_1\dd\xi_1\dd x_2\dd\xi_2\ f_1(x_1,\xi_1)f_2(x_2,\xi_2)\\
e^{\frac{-2i}{\theta}(\omega_0(x_1,x_2)+\omega_0(x_2,x)+\omega_0(x,x_1)+2\xi_1\xi_2+2\xi_2\xi+2\xi\xi_1)},\label{eq-prod-moy}
\end{multline}
where $\kappa=(-1)^{\frac{n(n+1)}{2}}\frac{(i\theta)^n}{4^n(\pi\theta)^m}$ is a normalization factor while $\theta$ is the deformation parameter.

This product is defined on smooth superfunctions with compact support (i.e. its body support is compact) but it is possible to extend it to a larger algebra by using the method of oscillatory integrals. Let us introduce the space
\begin{equation*}
\caB(\gR^{m|n}) =\caB(\gR^m)\otimes\bigwedge\gR^n
\end{equation*}
of complex-valued bounded smooth superfunctions with every derivative bounded. It is a generalization of the space $\caB(\gR^m)$ of Schwartz to the graded setting. Endowed with the seminorms
\begin{equation}
|f|_{\alpha} =\sup_{x\in\gR^m}\{\sum_I|D_x^\alpha f_I(x)|\}\label{eq-seminormB}
\end{equation}
and the pointwise product, this space is a Fr\'echet superalgebra. See e.g. \cite{Inoue:1991,Inoue:2003} for close examples of Fr\'echet superalgebras and related analysis.

The oscillatory integrals give a meaning to expressions like\footnote{we adopt the notation $\dd x_i\dd\xi_i:=\dd x_1\dd\xi_1\dd x_2\dd\xi_2\dots$.} $\int\dd x_i\dd\xi_i\ e^{i\omega_0(x_1,x_2)}f(x_1,\xi_1,x_2,\xi_2)$ for a (non-integrable) function $f\in\caB(\gR^{2m|2n})$. Let us define the operator $O$ by 
\begin{equation*}
(O\fois f)(x_1,\xi_1,x_2,\xi_2)=(1-\Delta_{(x_1,x_2)}) \Big(\frac{1}{1+x_1^2+x_2^2}f(x_1,\xi_1,x_2,\xi_2)\Big),
\end{equation*}
for a smooth superfunction $f$ with compact support and where $\Delta_{(x_1,x_2)}$ denotes the Laplacian with respect to the variables $(x_1,x_2)\in \gR^m\times\gR^m$. An integration by parts shows that
\begin{equation}
\int\dd x_i\dd\xi_i\ e^{i\omega_0(x_1,x_2)}f(x_1,\xi_1,x_2,\xi_2)= \int\dd x_i\dd\xi_i\ e^{i\omega_0(x_1,x_2)}(O^k\fois f)(x_1,\xi_1,x_2,\xi_2),\label{eq-qu-oscil}
\end{equation}
for any $k\in\gN$. Moreover, there exist (bounded) functions $b^\alpha\in\caB(\gR^{2m})$ such that
\begin{equation}
(O^k\fois f)(x_1,\xi_1,x_2,\xi_2)=\frac{1}{(1+x_1^2+x_2^2)^k}\sum_{\alpha\in\gN^{2m},\,|\alpha|\leq 2k} b^\alpha(x_1,x_2) D^\alpha f(x_1,\xi_1,x_2,\xi_2).\label{eq-qu-oscil2}
\end{equation}
As a consequence, for any $f\in\caB(\gR^{2m|2n})$, there exists an integer $k$ such that $(O^k\fois f)\in L^1(\gR^{2m|2n})$. Thus, the oscillatory integral of $f$ is given by the RHS member of \eqref{eq-qu-oscil}. With this notion, the formula \eqref{eq-prod-moy} defines an associative product on $\caB(\gR^{m|n})$.

\subsection{Universal deformation formula}
\label{subsec-udf}

In this subsection, we consider an action of the supergroup $\gR^{m|n}$ on a Fr\'echet algebra $(\algA,|\fois|_j)$:
\begin{equation*}
\rho:\gR^{m|n}\times(\algA\otimes\superA)\to(\algA\otimes\superA),
\end{equation*}
satisfying the conditions:
\begin{itemize}
\item $\rho_0=\text{id}$; $\forall z_1,z_2\in\gR^{m|n}$, $\rho_{z_1+z_2}=\rho_{z_1}\rho_{z_2}$.
\item $\forall z\in\gR^{m|n}$, $\rho_z:(\algA\otimes\superA)\to(\algA\otimes\superA)$ is an $\superA$-linear automorphism of algebras.
\item By writing $z=(x,\xi)\in\gR^{m|n}$, we can expand the action as: $\rho_{(x,\xi)}(a)=\sum_I \rho_x(a)_I\xi^I$; $\forall a\in\algA$, $\forall I$, $x\mapsto \rho_x(a)_I$ is $\algA$-valued and continuous.
\item There exists a constant $C>0$ such that
\begin{equation*}
\forall a\in\algA,\, \forall I,\,\forall j,\quad \exists k,\quad \forall x\in\gB M, \quad |\rho_x(a)_I|_j\leq C|a|_k.
\end{equation*}
\end{itemize}
We notice that the star-product \eqref{eq-prod-moy} can be trivially extended to $\algA$-valued superfunctions $\caB_\algA(\gR^{m|n})$ that are bounded with every derivative bounded. Note that this space is also Fr\'echet for the seminorms $|f|_{j,\alpha} =\sup_{x\in\gR^m}\{\sum_I|D^\alpha f_I(x)|_j\}$.

With the action $\rho$, we can deform the product of $\algA$ by this extended star-product.
\begin{definition}[smooth vectors]
The set of smooth vectors of $\algA$ for the action $\rho$ is defined as
\begin{equation*}
\algA^\infty=\{a\in\algA,\quad \rho^a:=z\mapsto \rho_z(a)\text{ is smooth on }\gR^{m|n}\}.
\end{equation*}
\end{definition}

We recall the following Lemma proved in \cite{Bieliavsky:2010su}.
\begin{lemma}
The set of smooth vectors $\algA^\infty$ is dense in $\algA$. Moreover, for any $a\in\algA^\infty$, the map $\rho^a$ lies in $\caB_{\algA^\infty}(\gR^{m|n})$.
\end{lemma}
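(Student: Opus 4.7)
The plan is to treat the two assertions separately. For density I would use a mollifier argument restricted to the even directions, exploiting that $\rho_{(y,0)}(a)=\rho_y(a)_\emptyset$ lies in $\algA$. Given $a\in\algA$ and $\phi\in C_c^\infty(\gR^m)$, set
\begin{equation*}
a_\phi:=\int_{\gR^m}\phi(y)\,\rho_y(a)_\emptyset\,dy,
\end{equation*}
which is well-defined in $\algA$ thanks to conditions 3 and 4. The abelian group law of $\gR^{m|n}$ and the $\superA$-linearity of $\rho$ yield, for every $I$, $\rho_{y_0}(a_\phi)_I=\int_{\gR^m}\phi(y-y_0)\rho_y(a)_I\,dy$, which is smooth in $y_0$ by differentiation under the integral sign (licit because $\phi\in C_c^\infty$ and $|\rho_y(a)_I|_j$ is uniformly bounded in $y$ by condition 4); hence $a_\phi\in\algA^\infty$. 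Taking an approximate identity $\phi_n\to\delta_0$ on $\gR^m$, the continuity of $y\mapsto\rho_y(a)_\emptyset$ at the origin gives $a_{\phi_n}\to a$ in $\algA$, whence density.

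For the regularity of $\rho^a$ I would exploit left-invariance. The composition $\rho_{(h,0)}\circ\rho_{(y,\xi)}=\rho_{(y+h,\xi)}$ combined with $\superA$-linearity yields, after matching coefficients of $\xi^I$, the identity $\rho_h(\rho_y(a)_I)_\emptyset=\rho_{y+h}(a)_I$. Introducing the left-invariant generators $X_i(b):=\partial_{h_i}|_{h=0}\rho_h(b)_\emptyset$ and using the dual identity $\rho_{(y,\xi)}\circ\rho_{(h,0)}=\rho_{(y+h,\xi)}$ one obtains also $\rho_y(X_i a)_I=\partial_{y_i}\rho_y(a)_I$. A straightforward induction, in which at each step one checks that $X^\alpha a$ remains in $\algA^\infty$ via $\rho^{X_i a}(y,\xi)=\partial_{h_i}|_{h=0}\rho_{(y+h,\xi)}(a)$, then gives
\begin{equation*}
D^\alpha\rho_y(a)_I=\rho_y(X^\alpha a)_I,
\end{equation*}
which condition 4 bounds uniformly in $y$ by $C\,|X^\alpha a|_k$.

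The remaining and most delicate point, which I expect to be the main obstacle, is to show that each value $\rho_y(a)_I$ itself belongs to $\algA^\infty$. For this I would use the reversed composition $\rho_{(h,\eta)}\circ\rho_{(y,\xi)}=\rho_{(y+h,\xi+\eta)}$ together with $\superA$-linearity to get
\begin{equation*}
\sum_I\rho_{(h,\eta)}(\rho_y(a)_I)\,\xi^I=\sum_K\rho_{y+h}(a)_K\,(\xi+\eta)^K,
\end{equation*}
then expand $(\xi+\eta)^K=\sum_{L\sqcup M=K}\eps'(L,M)\,\xi^L\eta^M$ and match coefficients of $\xi^I$. This expresses $\rho_{(h,\eta)}(\rho_y(a)_I)$ as a finite combination of terms $\rho_{y+h}(a)_{I\cup M}\eta^M$, smooth in $(h,\eta)$ because $a\in\algA^\infty$; so $\rho_y(a)_I\in\algA^\infty$. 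Combining with the commutation $X^\beta\rho_y(c)_I=\rho_y(X^\beta c)_I$ applied to $c=X^\alpha a$ controls every $\algA^\infty$-seminorm of $D^\alpha\rho_y(a)_I$ uniformly in $y$, whence $\rho^a\in\caB_{\algA^\infty}(\gR^{m|n})$. The technical difficulty is the careful bookkeeping of $\gZ_2$-signs in the expansion of $(\xi+\eta)^K$ and in moving $\rho_{(h,\eta)}$ past odd factors; modulo these, the argument reduces to standard convolution and differentiation-under-the-integral estimates.
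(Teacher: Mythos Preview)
The paper does not actually supply a proof of this lemma; it merely records it as a result established in the companion paper on the Heisenberg supergroup and moves on. There is therefore no argument in the present text to compare yours against.

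That said, your outline is the standard and correct one. The density part is the classical G{\aa}rding smoothing argument, restricted to the even directions --- which suffices, since the odd directions contribute only a finite exterior factor. The points requiring care are that the Bochner-type integral defining $a_\phi$ exists in the Fr\'echet space $\algA$ and that $\rho_{(y_0,\xi_0)}$ may be pulled inside it; both follow from the continuity encoded in the third and fourth hypotheses on $\rho$. For the second assertion, abelianness of $\gR^{m|n}$ makes the orbit map equivariant under translation, so $y$-derivatives of $\rho_y(a)_I$ are realized by applying the infinitesimal generators $X_i$ to $a$, after which the fourth hypothesis yields the uniform bounds. Your treatment of the odd variables via the expansion of $(\xi+\eta)^K$ is precisely how the identity
\[
\rho_{y'}\bigl(D^\gamma_y\rho_y(a)_I\bigr)_K=(-1)^{|I||K|}\eps(I,K)\,D^\gamma_y\rho_{y+y'}(a)_{I\cup K}
\]
invoked later in the paper is obtained, and it immediately gives both $\rho_y(a)_I\in\algA^\infty$ and the required control of its $\algA^\infty$-seminorms. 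The sign bookkeeping you flag is indeed the only tedious part, but it is routine. In short, your argument is correct and is almost certainly what the cited reference does as well.
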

This means that we can now form the star-product of $\rho^a$ and $\rho^b$, for $a$ and $b$ smooth vectors.

\begin{proposition}[\cite{Bieliavsky:2010su}]
\label{prop-udfdirect}
The expression $a\star_\rho b:=(\rho^a\star\rho^b)(0)$, for $a,b\in\algA^\infty$, yields an associative product on $\algA^\infty$. Endowed with the seminorms
\begin{equation*}
|a|_{j,\alpha}:=|\rho^a|_{j,\alpha}=\sup_{x\in\gR^m}\{\sum_I|D^\alpha \rho_x(a)_I|_j\},
\end{equation*}
$(\algA^\infty,\star_\rho)$ is a (noncommutative) Fr\'echet algebra.
\end{proposition}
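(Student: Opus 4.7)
The plan is to reduce every assertion to the corresponding property of the star-product $\star$ already established on $\caB_\algA(\gR^{m|n})$. By the preceding lemma, $\rho^a,\rho^b\in\caB_{\algA^\infty}(\gR^{m|n})\subset\caB_\algA(\gR^{m|n})$; the expression \eqref{eq-prod-moy}, interpreted via the regularising operator $O$ and the identity \eqref{eq-qu-oscil2}, extends verbatim to $\algA$-valued superfunctions, so $\rho^a\star\rho^b\in\caB_\algA(\gR^{m|n})$ and evaluating at $0$ produces a well-defined element $a\star_\rho b\in\algA$.

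The decisive intermediate step is the covariance identity $\rho^{a\star_\rho b}=\rho^a\star\rho^b$. Since each $\rho_z$ is a continuous $\superA$-linear algebra automorphism, it commutes with the oscillating integral defining $\star$; combined with the group law $\rho_z\rho_{x_i}=\rho_{z+x_i}$, this converts the action of $\rho_z$ on $(\rho^a\star\rho^b)(0)$ into a diagonal translation of the integration variables. The phase in \eqref{eq-prod-moy} depends only on the differences $x-x_i$, $x_i-x_j$ (and analogously for the odd coordinates), hence is invariant under this diagonal translation, yielding
\begin{equation*}
\rho_z(a\star_\rho b)=(\rho^a\star\rho^b)(z).
\end{equation*}
Since the right-hand side lies in $\caB_\algA(\gR^{m|n})$, the map $z\mapsto\rho_z(a\star_\rho b)$ is smooth, so $a\star_\rho b\in\algA^\infty$ and $\rho^{a\star_\rho b}=\rho^a\star\rho^b$. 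Associativity then follows at once,
\begin{equation*}
(a\star_\rho b)\star_\rho c=\bigl((\rho^a\star\rho^b)\star\rho^c\bigr)(0)=\bigl(\rho^a\star(\rho^b\star\rho^c)\bigr)(0)=a\star_\rho(b\star_\rho c),
\end{equation*}
using the associativity of $\star$ on $\caB_\algA(\gR^{m|n})$.

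For the Fr\'echet algebra structure, the seminorms $|\fois|_{j,\alpha}$ separate points because $|a|_{j,0}\geq|a|_j$, and completeness follows from that of $\caB_\algA(\gR^{m|n})$: a Cauchy sequence $(a_n)\subset\algA^\infty$ gives a Cauchy sequence $(\rho^{a_n})$ converging to some $F$, and continuity of each $\rho_y$ forces $F=\rho^a$ with $a:=F(0)\in\algA^\infty$ and $a_n\to a$. Continuity of $\star_\rho$ reduces, via $|a\star_\rho b|_{j,\alpha}=|\rho^a\star\rho^b|_{j,\alpha}$, to a submultiplicative estimate of the form
\begin{equation*}
|\rho^a\star\rho^b|_{j,\alpha}\leq C\sum_{\text{finite}}|\rho^a|_{k,\beta}\,|\rho^b|_{k',\beta'}
\end{equation*}
for $\star$ on $\caB_\algA(\gR^{m|n})$. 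This last bound is the principal technical point: one obtains it by applying \eqref{eq-qu-oscil} and \eqref{eq-qu-oscil2} enough times to render the phase integral absolutely convergent and then differentiating under the integral sign, keeping careful track of how many applications of the operator $O$ are required and how many extra derivatives of $\rho^a,\rho^b$ this generates.
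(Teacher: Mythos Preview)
The paper does not supply its own proof of this proposition; it is stated without argument, the result being imported from the companion reference \cite{Bieliavsky:2010su}. Your proof is therefore being compared against the surrounding machinery rather than an explicit argument in the text.

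Your approach is correct and is exactly the natural one. The covariance identity $\rho^{a\star_\rho b}=\rho^a\star\rho^b$ is the heart of the matter, and your justification---pulling $\rho_z$ through the (regularised) integral, using the group law to convert $\rho_z\rho_{z_i}$ into $\rho_{z+z_i}$, and then exploiting that the phase in \eqref{eq-prod-moy} is invariant under the simultaneous shift $(z,z_1,z_2)\mapsto(0,z_1-z,z_2-z)$---is sound; one checks directly that both the even part $\omega_0(x_2-x,w_1)+\omega_0(x-x_1,w_2)+\omega_0(x_1-x_2,w)$ and the odd part $\xi\xi_1+\xi_1\xi_2+\xi_2\xi$ are preserved under this diagonal translation. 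Associativity and closure under $\star_\rho$ then follow as you say.

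For the Fr\'echet estimates, your sketch is in the same spirit as the paper's proof of the continuity of the twist $F$ (the proposition immediately following), which likewise invokes \eqref{eq-qu-oscil}--\eqref{eq-qu-oscil2} and the group-action identity \eqref{eq-udf-gr} to bound the oscillating integral by finitely many seminorms $|\fois|_{j,\alpha+\gamma}$. So your outline matches the techniques the paper itself deploys, and nothing is missing.
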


It turns out that the star-product \eqref{eq-prod-moy} can be rewritten as
\begin{equation*}
(f_1\star f_2)(x,\xi)=\kappa\int\dd x_1\dd\xi_1\dd x_2\dd\xi_2\ f_1(x_1+x,\xi_1+\xi)f_2(x_2+x,\xi_2+\xi)
e^{\frac{-2i}{\theta}(\omega_0(x_1,x_2)+2\xi_1\xi_2)}.
\end{equation*}
Then, we can write directly the twist $F:\algA^\infty\otimes\algA^\infty\to\algA^\infty\otimes\algA^\infty$ associated to the deformation
\begin{equation}
F=\kappa\int_{\gR^{m|n}\times\gR^{m|n}}\dd z_1\dd z_2\ e^{-\frac{2i}{\theta}\omega(z_1,z_2)}\rho_{z_1}\otimes\rho_{z_2},\label{eq-udf-twist}
\end{equation}
with $z=(x,\xi)\in\gR^{m|n}$ and where $\rho$ replaces the translation for a general action $\rho$ on an algebra $\algA$. Denoting by $\mu_0:\algA\otimes\algA\to\algA$ the undeformed product of $\algA$, we can express the deformed product of Proposition \eqref{prop-udfdirect} as $\mu_F:=\mu_0\circ F$, namely, $\mu_F(a\otimes b)= a\star_\rho b$. The expression \eqref{eq-udf-twist} is also called the universal deformation formula of the supergroup $\gR^{m|n}$ as it can deform a dense subspace $\algA^\infty$ of every algebra $\algA$ on which $\gR^{m|n}$ acts (with some regularity assumed at the beginning of this section).

We can now show new properties regarding the twist of this deformation. Let us recall the definition of the projective tensor product \cite{Grothendieck:1966} of two Fr\'echet algebras $(\algA,|\fois|_j)$ and $(\algB,|\fois|_k)$. It is the completion of the algebraic tensor product $\algA\otimes\algB$ for the family of seminorms: $\forall c\in\algA\otimes\algB$,
\begin{equation}
\pi_{j,k}(c)=\inf\Big\{\sum_i |a_i|_j|b_i|_k,\quad c=\sum_i a_i\otimes b_i\Big\},\label{eq-seminormpi}
\end{equation}
where the infimum is taken over all decompositions $c=\sum_i a_i\otimes b_i$. This completion is denoted by $\algA\widehat\otimes_{\pi}\algB$.

\begin{proposition}
The twist $F$ is a continuous endomorphism on the projective tensor product of $\algA^\infty$ with itself: $F\in\caL(\algA^\infty\widehat{\otimes}_\pi\algA^\infty)$.
\end{proposition}
\begin{proof}
Let $c\in\algA^\infty\otimes\algA^\infty$. Then,
\begin{equation*}
\pi_{j,\alpha;k,\beta}(F(c))=\inf\{|\sum_i F(a_i\otimes b_i)|_{j,\alpha,k,\beta}\}
\end{equation*}
where $c$ can be written as $\sum_i a_i\otimes b_i$, and the infimum is taken over all such decompositions. By using the definition of oscillatory integral \eqref{eq-qu-oscil}, and defining the partial operators
\begin{equation}
(O_{z_1}\fois f)(z_1,z_2)=\frac{1}{1+x_1^2}(1-\Delta_{x_2}) f(z_1,z_2),\quad (O_{z_2}\fois f)(z_1,z_2)=\frac{1}{1+x_2^2}(1-\Delta_{x_1}) f(z_1,z_2),\label{eq-oposcil}
\end{equation}
with $z_i=(x_i,\xi_i)\in\gR^{m|n}$, we obtain
\begin{multline*}
\pi_{j,\alpha;k,\beta}(F(c))=\inf\Big|\kappa'\int\dd z_1\dd z_2 e^{-\frac{2i}{\theta}\omega(z_1,z_2)}O^{k_1}_{z_1}O^{k_2}_{z_2}\sum_i  \rho_{z_1}(a_i)\otimes \rho_{z_2}(b_i)\Big|_{j,\alpha,k,\beta}\\
\leq \inf|\kappa'|\sum_{i,I,J}\int\dd x_1\dd x_2 \frac{1}{(1+x_1^2)^{k_1}(1+x_2^2)^{k_2}}
\quad\sum_{\gamma,\delta} |b^\gamma_1(x_1)b^\delta_2(x_2)| |D^\gamma\rho_{x_1}(a_i)_I|_{j,\alpha} | D^\delta\rho_{x_2}(b_i)_J|_{k,\beta}
\end{multline*}
in the notation of \eqref{eq-qu-oscil2}, if $I,J$ are summed over $\{1,\dots,n\}$ with some conditions, and for $\kappa'$ a constant. By definition of the seminorm,
\begin{equation*}
|D^\gamma\rho_{x_1}(a_i)_I|_{j,\alpha}=\sup_{x_3\in\gR^m}\{\sum_K|D^\alpha_{x_3} \rho_{x_3}(D^\gamma_{x_1}\rho_{x_1}(a_i)_I)_K|_j\}.
\end{equation*}
Since $\rho$ is a group action, we can deduce that
\begin{equation}
\rho_{x_3}(D^\gamma_{x_1}\rho_{x_1}(a_i)_I)_K=(-1)^{|I||K|}\eps(I,K) D^\gamma_{x_1} \rho_{x_1+x_3}(a_i)_{I\cup K}.\label{eq-udf-gr}
\end{equation}
We then choose sufficiently large numbers $k_1$ and $k_2$ such that there exists a constant $C>0$ with
\begin{equation*}
\pi_{j,\alpha;k,\beta}(F(c))\leq C\inf\sum_{i,\gamma,\delta}|a_i|_{j,\alpha+\gamma}|b_i|_{k,\beta+\delta}= C \sum_{\gamma,\delta}|c|_{j,\alpha+\gamma,k,\beta+\delta},
\end{equation*}
where the sum on multi-indices $\gamma,\delta\in\gN^m$ satisfies the constraint $|\gamma|\leq 2k_1$ and $|\delta|\leq 2 k_2$. The last inequality shows that $F$ is continuous on $\algA^\infty\widehat{\otimes}_\pi\algA^\infty$.
\end{proof}

\begin{example}
If we take $\algA=\caB(\gR^{m|n})$ and $\rho_z(f)(z')=f(z+z')$, then the space of smooth vectors is $\algA^\infty=\caB(\gR^{m|n})$ and the product $\mu_F$ corresponds to \eqref{eq-prod-moy}.
\end{example}
There are a lot of other examples, like the actions of $\gR^{m|n}$ over a certain class of continuous superfunctions on the trivial supermanifolds on which $\gR^{m|n}$ is acting (see \cite{Bieliavsky:2010su}).

\subsection{External symmetries of the deformation}
\label{subsec-hopf}

To introduce the external symmetries of the deformation or of the twist $F$, we need the notion of topological Hopf algebra, endowed with a Fr\'echet topology.

\begin{definition}
\label{def-frhopf}
A Fr\'echet-Hopf algebra is a Hopf algebra $H$ endowed with a Fr\'echet topology, such that the algebraic operations - product, unit, coproduct, counit and antipode - are continuous maps for the Fr\'echet structure and for a given topological tensor product on $H$.

Given a Fr\'echet-Hopf algebra $H$ with topological tensor product $\widehat\otimes_{HH}$, as well as a topological tensor product $\widehat\otimes_{\algA H}$ between $H$ and a Fr\'echet algebra $\algA$ that has itself another topological tensor product $\widehat\otimes_{\algA\algA}$; we say that $\algA$ is a comodule-algebra of $H$ if it is an algebraic comodule-algebra of $H$, if the coaction can be continuously extended to
\begin{equation*}
\algA\to\algA\widehat\otimes_{\algA H} H
\end{equation*}
and if the three topological tensor are compatible, i.e. if the flips involved in the axioms of a comodule-algebra are continuous for the Fr\'echet structures (see Lemma \ref{lem-com-exchange} for an example).
\end{definition}

In the context of superspaces, we can introduce the following Fr\'echet-Hopf algebra. Let $H:=\caB(\gR^{m|n})$ with its Fr\'echet topology \eqref{eq-seminormB}. We introduce a topological tensor product different from the projective one, denoted by $\tau$, as follows. We define $\algA\widehat\otimes_{\tau} H$ to be the completion of the algebraic tensor product for the family of seminorms of $\caB_\algA(\gR^{m|n})$:
\begin{equation}
\tau_{j,\alpha}(f)=|f|_{j,\alpha} =\sup_{x\in\gR^m}\Big\{\sum_I|D^\alpha f_I(x)|_j\Big\}.\label{eq-seminormtau}
\end{equation}
One can then see that $H\widehat\otimes_{\tau}H\simeq \caB(\gR^{m|n}\times \gR^{m|n})$ and by definition, $\algA\widehat\otimes_{\tau}H\simeq \caB_\algA(\gR^{m|n})$. On $H$ we consider the standard Hopf algebra structure, whose algebraic operations can be continuously extended for the tensor product $\tau$:
\begin{itemize}
\item the product $\mu:H\widehat\otimes_\tau H\to H$ defined by $\mu(f_1\otimes f_2)(z)=f_1(z) f_2(z)$,
\item the unit $\gone:\gC\to H$ defined by $\gone(\lambda)(z)= \lambda$,
\item the coproduct $\Delta:H\to H\widehat\otimes_\tau H$ defined by $\Delta f( z_1,z_2)=f(z_1 z_2)$,
\item the counit $\eps: H\to\gC$ defined by $\eps(f)=f(0)$,
\item the antipode $S:H\to H$ defined by $S f(z)=f(-z)$,
\end{itemize}
where $f_i\in H$, $z_i\in \gR^{m|n}$, $\lambda\in\gC$. These operations satisfy the useful axioms of Hopf algebra, taking into account that the flip $\sigma_{12}:H\otimes H\to H\otimes H$ is defined by
\begin{equation}
\sigma_{12}(f_1\otimes f_2)=(-1)^{|f_1||f_2|}f_2\otimes f_1\label{eq-flip}
\end{equation}
because of the grading. This means that for $f\in H\widehat\otimes_\tau H$, $\sigma_{12}f(z,z')=f(z',z)$.

\begin{proposition}
\label{prop-hopf-smooth}
$H=\caB(\gR^{m|n})$ is a $\gZ_2$-graded supercommutative Fr\'echet-Hopf algebra for the topological tensor product $\tau$.
\end{proposition}
\begin{proof}
Due to the explicit expression of the coproduct
\begin{equation*}
\Delta(f)(x_1,\xi_1;x_2,\xi_2)=\sum_{I,J}\eps(I,J)f_{I\cup J}(x_1+x_2)\xi_1^I\xi_2^J
\end{equation*}
obtained by an expansion on the odd variables and by \eqref{eq-eps}, we have $\forall f\in\caB(\gR^{m|n})$,
\begin{align*}
\tau_{\alpha,\beta}(\Delta(f))=\sup_{x_1,x_2\in\gR^m}\{\sum_{I,J}|\eps(I,J)D^\alpha_{x_1} D^\beta_{x_2} f_{I\cup J}(x_1+x_2)|\}\leq 2^n |f|_{\alpha+\beta},
\end{align*}
which shows the continuity of $\Delta:H\to H\widehat\otimes_\tau H$. The continuity of the other operations can be proved in the same way. The algebraic properties between operations are the same as in the non-graded setting except $(S\otimes S)\Delta=\sigma_{12}\Delta S$ involving the flip \eqref{eq-flip}. It can be showed that
\begin{equation*}
\sigma_{12}\Delta(f)(x_1,\xi_1;x_2,\xi_2)=\Delta(f)(x_2,\xi_2;x_1,\xi_1)=\Delta(f)(x_1,\xi_1;x_2,\xi_2),
\end{equation*}
because $\gR^{m|n}$ is Abelian. Then, we have
\begin{equation*}
(S\otimes S)\Delta(f)(x_1,\xi_1;x_2,\xi_2)=f(-x_1-x_2,-\xi_1-\xi_2)=\sigma_{12}\Delta S(f)(x_1,\xi_1;x_2,\xi_2).
\end{equation*}
\end{proof}

\begin{remark}
\label{rmk-nuclear}
Note that $\caC^\infty(\gR^{m|n})$ is also a Fr\'echet-Hopf algebra (see \cite{Bonneau:2003vb} in the non-graded setting). Since it is nuclear contrary to $\caB(\gR^{m|n})$, this structure is independent of the choice of the topological tensor product. In this paper, we consider $\caB(\gR^{m|n})$ for the deformation quantization since $\caC^\infty(\gR^{m|n})$ is too large for the star-product to be defined on it (see section \ref{subsec-prod}). $\caB(\gR^{m|n})$ is not nuclear but we will see that the tensor products $\tau$ and $\pi$ (needed for representations) are compatible in a certain sense. We could of course have considered a smaller nuclear subalgebra like the Schwartz algebra $\caS(\gR^{m|n})$ - see \cite{Bieliavsky:2010ab} in the non-graded setting - but then the coproduct does not stabilize this algebra and we have to see it as valued in (the tensor product of) the multiplier algebra of $\caS(\gR^{m|n})$. See also \cite{Voigt:2008} for another framework (bornological vector spaces) adapted to quantum groups.
\end{remark}

Let us present the dual version of the universal deformation formula studied in section \ref{subsec-udf}, which will lead to the external symmetries. As before, we consider the Fr\'echet-Hopf algebra $H=\caB(\gR^{m|n})$ associated to the supergroup $\gR^{m|n}$. The reformulation of the action $\rho$ in this context will be done by the notion of $H$-comodule algebras (see Definition \ref{def-frhopf}). To this aim, we need the following intermediate result.
\begin{lemma}
\label{lem-com-exchange}
The topological tensor product $\tau$ is compatible with the projective one $\pi$, in the sense that the flip
\begin{equation*}
\sigma_{23}:(\algA\widehat\otimes_\tau H)\widehat\otimes_\pi(\algA\widehat\otimes_\tau H)\to (\algA\widehat\otimes_\pi\algA)\widehat\otimes_\tau(H\widehat\otimes_\tau H),
\end{equation*}
defined by $\sigma_{23}(a_1\otimes f_1\otimes a_2\otimes f_2)=a_1\otimes a_2\otimes f_1\otimes f_2$, is continuous, for any Fr\'echet algebra $(\algA,|\fois|_j)$.
\end{lemma}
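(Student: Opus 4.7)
The plan is to unravel both sides via the identifications $\algA\widehat\otimes_\tau H\simeq\caB_\algA(\gR^{m|n})$ and $H\widehat\otimes_\tau H\simeq\caB(\gR^{m|n}\times\gR^{m|n})$, so that the right-hand side becomes $\caB_{\algA\widehat\otimes_\pi\algA}(\gR^{m|n}\times\gR^{m|n})$, and then bound the natural family of seminorms on the right by those on the left termwise.

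Concretely, fix seminorms $|\fois|_{j_1}$, $|\fois|_{j_2}$ on $\algA$ and multi-indices $\alpha,\beta\in\gN^m$. A typical defining seminorm on the right-hand side is
\begin{equation*}
\tau_{\pi_{j_1,j_2},\alpha,\beta}(w)=\sup_{(x,y)\in\gR^m\times\gR^m}\sum_{I,J}\pi_{j_1,j_2}\bigl(D^\alpha_x D^\beta_y w_{I,J}(x,y)\bigr),
\end{equation*}
where we have written $w=\sum_{I,J} w_{I,J}\otimes\theta^I\otimes\theta^J$ with $w_{I,J}\in\caB_{\algA\widehat\otimes_\pi\algA}(\gR^m\times\gR^m)$. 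For an elementary decomposition $u=\sum_i c_i\otimes d_i$ of $u\in(\algA\widehat\otimes_\tau H)\otimes(\algA\widehat\otimes_\tau H)$, with $c_i=\sum_I (c_i)_I\otimes\theta^I$ and $d_i=\sum_J (d_i)_J\otimes\theta^J$, the components of $\sigma_{23}(u)$ are exactly $\sum_i (c_i)_I(x)\otimes(d_i)_J(y)$.

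The key step is the estimate of $\pi_{j_1,j_2}$ on such a sum: by the very definition of the projective seminorm,
\begin{equation*}
\pi_{j_1,j_2}\Bigl(\sum_i D^\alpha_x (c_i)_I(x)\otimes D^\beta_y (d_i)_J(y)\Bigr)\leq\sum_i \bigl|D^\alpha_x(c_i)_I(x)\bigr|_{j_1}\bigl|D^\beta_y(d_i)_J(y)\bigr|_{j_2}.
\end{equation*}
Summing over $I,J$, taking the sup over $(x,y)$, and using the elementary bound $\sup_{(x,y)}\sum_i F_i(x)G_i(y)\leq\sum_i\sup_x F_i(x)\cdot\sup_y G_i(y)$ for nonnegative $F_i,G_i$ (applied to $F_i(x)=\sum_I|D^\alpha_x(c_i)_I(x)|_{j_1}$ and analogously for $G_i$), one obtains
\begin{equation*}
\tau_{\pi_{j_1,j_2},\alpha,\beta}(\sigma_{23}(u))\leq\sum_i |c_i|_{j_1,\alpha}\,|d_i|_{j_2,\beta}.
\end{equation*}
Taking the infimum over all decompositions $u=\sum_i c_i\otimes d_i$ yields $\tau_{\pi_{j_1,j_2},\alpha,\beta}(\sigma_{23}(u))\leq\pi_{(j_1,\alpha);(j_2,\beta)}(u)$, which is precisely the continuity of $\sigma_{23}$ on the algebraic tensor product. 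Density then extends $\sigma_{23}$ to the completion, giving the claim.

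I expect no real obstacle: the only subtlety is the variable-separation inequality above, which is needed because the $\tau$-seminorm takes a single sup over $(x,y)\in\gR^m\times\gR^m$ whereas the $\pi$-structure on the left naturally factorises each $(c_i,d_i)$ pair separately. Everything else is bookkeeping, and the parities from $\xi^I\eta^J$ play no role since we only estimate absolute values of coefficients.
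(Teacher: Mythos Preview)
Your argument is correct and follows essentially the same route as the paper's own proof: bound the inner projective seminorm by a chosen decomposition, separate the joint supremum over $(x,y)$ into a product of suprema, and then pass to the infimum over decompositions. Your version is in fact slightly cleaner, working directly with general $c_i,d_i\in\caB_\algA(\gR^{m|n})$ and obtaining the continuity constant $1$, whereas the paper decomposes further into elementary tensors $a_i\otimes f_i$ and ends up with a constant $C\in[1,2^{n+1}]$.
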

\begin{proof}
For $a_i,b_i\in\algA$ and $f_i,g_i\in H$, due to the expressions \eqref{eq-seminormpi} and \eqref{eq-seminormtau} of the seminorms of $\pi$ and $\tau$, one has
\begin{align*}
\pi_{j,\alpha;k,\beta}(\sum_i a_i\otimes f_i\otimes b_i\otimes g_i)&=\inf \sum_i \tau_{j,\alpha}(a_i\otimes f_i)\tau_{k,\beta}(b_i\otimes g_i)\\
&=\inf \sum_i \sup_{x,y\in\gR^m}\sum_{I,J} |a_i|_j|D^\alpha f_{i,I}(x)||b_i|_k|D^\beta g_{i,J}(y)|.
\end{align*}
Moreover,
\begin{align*}
\tau_{j,k;\alpha,\beta}(\sigma_{23}(\sum_i a_i\otimes f_i\otimes b_i\otimes g_i)) &= \sup_{x,y\in\gR^m}\sum_{I,J}\pi_{j,k}(\sum_i (a_i\otimes b_i) D^\alpha f_{i,I}(x)D^\beta g_{i,J}(y))\\
&=\sup_{x,y\in\gR^m}\sum_{I,J}\inf \sum_i|a_i|_j|b_i|_k|D^\alpha f_{i,I}(x)||D^\beta g_{i,J}(y)|.
\end{align*}
Since $\forall x,y\in\gR^m$,
\begin{equation*}
\inf \sum_i|a_i|_j|b_i|_k|D^\alpha f_{i,I}(x)||D^\beta g_{i,J}(y)|\leq \inf \sum_i\sup_{x,y\in\gR^m}|a_i|_j|b_i|_k|D^\alpha f_{i,I}(x)||D^\beta g_{i,J}(y)|,
\end{equation*}
there exists a constant $1\leq C\leq 2^{n+1}$ such that
\begin{equation*}
\tau_{j,k;\alpha,\beta}(\sigma_{23}(\sum_i a_i\otimes f_i\otimes b_i\otimes g_i))\leq C\, \pi_{j,\alpha;k,\beta}(\sum_i a_i\otimes f_i\otimes b_i\otimes g_i),
\end{equation*}
which proves the continuity of $\sigma_{23}$.
\end{proof}

\begin{proposition}
The action $\rho$ of $\gR^{m|n}$ on a Fr\'echet algebra $(\algA,\mu_0)$ with axioms of section \ref{subsec-udf}, generates the continuous coaction $\chi:\algA^\infty\to\algA^\infty\widehat{\otimes}_\tau H$ defined by
\begin{equation*}
\forall a\in\algA^\infty,\,\forall z\in\gR^{m|n}\quad:\quad \chi(a)(z):=\rho_z(a).
\end{equation*}
Then $(\algA^\infty,\mu_0)$ is an $H$-comodule algebra, with $\widehat\otimes_{\algA H}:=\widehat\otimes_\tau$ and $\widehat\otimes_{\algA\algA}:=\widehat\otimes_\pi$.
\end{proposition}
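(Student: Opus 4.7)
The plan is to verify three ingredients: (i) that $\chi$ actually lands in the stated target Fr\'echet space and is continuous; (ii) the two coaction axioms (coassociativity and counit); and (iii) the compatibility of $\chi$ with the product $\mu_0$ of $\algA^\infty$. Everything follows from transporting the group-action axioms of $\rho$ through the identifications established in subsections \ref{subsec-hopf} and \ref{subsec-udf}.

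For well-definedness and continuity, I would start from the Lemma recalled just before the proposition, which asserts that for $a\in\algA^\infty$ the map $\rho^a:z\mapsto \rho_z(a)$ belongs to $\caB_{\algA^\infty}(\gR^{m|n})$. Under the isomorphism $\algA^\infty \widehat\otimes_\tau H\simeq \caB_{\algA^\infty}(\gR^{m|n})$ of subsection \ref{subsec-hopf}, this identifies $\chi(a)=\rho^a$ with an element of $\algA^\infty\widehat\otimes_\tau H$. Moreover, the seminorms $|a|_{j,\alpha}$ on $\algA^\infty$ were defined precisely as $|\rho^a|_{j,\alpha}=\tau_{j,\alpha}(\chi(a))$, so $\chi$ is an isometry, hence continuous.

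For the coaction axioms, I would evaluate on the concrete side, where Proposition \ref{prop-sheaf-corresp} lets me view elements of $\algA^\infty\widehat\otimes_\tau H$ as $\algA^\infty$-valued smooth superfunctions on $\gR^{m|n}$. Since the supergroup law on $\gR^{m|n}$ is $(z_1,z_2)\mapsto z_1+z_2$ (Example \ref{ex-concrete-supergr}), the coproduct and counit act by $(\Delta f)(z_1,z_2)=f(z_1+z_2)$ and $\eps(f)=f(0)$ (Proposition \ref{prop-func-ex}). Then
\[
\bigl((\chi\otimes\text{id})\circ\chi(a)\bigr)(z_1,z_2)=\rho_{z_1}\bigl(\rho_{z_2}(a)\bigr)=\rho_{z_1+z_2}(a)=\bigl((\text{id}\otimes\Delta)\circ\chi(a)\bigr)(z_1,z_2)
\]
by the group-action property $\rho_{z_1}\rho_{z_2}=\rho_{z_1+z_2}$, and $(\text{id}\otimes\eps)\circ\chi(a)=\rho_0(a)=a$ by $\rho_0=\text{id}$. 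The comodule-algebra axiom $\chi\circ\mu_0=(\mu_0\otimes\tilde\mu)\circ\sigma_{23}\circ(\chi\otimes\chi)$ reduces pointwise to $\rho_z(ab)=\rho_z(a)\rho_z(b)$, which holds because each $\rho_z$ is assumed to be an $\superA$-linear algebra automorphism; the continuity of the target multiplication is ensured by Lemma \ref{lem-com-exchange}, which guarantees that $\sigma_{23}$ interchanging the $\pi$ and $\tau$ tensor products is continuous.

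The only step requiring care is to see that the sheaf-theoretic computations genuinely match the pointwise ones: the coproduct in Proposition \ref{prop-func-ex} carries a mass of $\eps(J,K)$ signs, and one has to be sure these cancel correctly against the $\gZ_2$-graded splitting of $\rho^a=\sum_I \rho_\bullet(a)_I\otimes \theta^I$ in the identification $C^\infty_\algA(\gR^{m|n})\simeq C^\infty(\gR^m,\algA)\otimes\textstyle\bigwedge\gR^n$. Rather than expanding both sides explicitly in the $\theta^I$ basis, I would argue that the two formulations are manifestly equivalent at the level of $\algA^\infty$-valued superfunctions on $\gR^{m|n}$ via Proposition \ref{prop-sheaf-corresp}, so that the identity $\rho_{z_1}\rho_{z_2}=\rho_{z_1+z_2}$ and the bookkeeping of the coproduct give the same result; this is the main, but essentially routine, obstacle.
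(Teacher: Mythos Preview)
Your treatment of the coaction axioms and the comodule-algebra identity is correct and essentially identical to the paper's: both reduce the identities $(\chi\otimes\text{id})\chi=(\text{id}\otimes\Delta)\chi$, $(\text{id}\otimes\eps)\chi=\text{id}$ and $(\mu_0\otimes\tilde\mu)\sigma_{23}(\chi\otimes\chi)=\chi\mu_0$ to the group-action properties of $\rho$ and the multiplicativity of each $\rho_z$, invoking Lemma~\ref{lem-com-exchange} for the continuity of the maps in \eqref{eq-com-axiom}.

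There is, however, a genuine gap in your continuity argument for $\chi$. You claim that $\chi$ is an isometry because $|a|_{j,\alpha}=|\rho^a|_{j,\alpha}=\tau_{j,\alpha}(\chi(a))$. But the seminorm $\tau_{j,\alpha}$ you write down is the one on $\caB_\algA(\gR^{m|n})$, whereas the target of $\chi$ is $\algA^\infty\widehat\otimes_\tau H\simeq\caB_{\algA^\infty}(\gR^{m|n})$, whose seminorms carry an \emph{additional} multi-index: they are
\[
\tau_{j,\alpha;\beta}(\chi(a))=\sup_{y\in\gR^m}\sum_I\bigl|D^\beta_y\rho_y(a)_I\bigr|_{j,\alpha}
=\sup_{y,y'}\sum_{I,K}\bigl|D^\alpha_{y'}\rho_{y'}\bigl(D^\beta_y\rho_y(a)_I\bigr)_K\bigr|_j,
\]
with the inner $|\cdot|_{j,\alpha}$ the $\algA^\infty$-seminorm, not the $\algA$-seminorm $|\cdot|_j$. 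This double layer of $\rho$'s is not tautologically controlled by any single $|a|_{j,\gamma}$; one has to simplify the inner expression. The paper does exactly this, using the group-action identity \eqref{eq-udf-gr},
\[
\rho_{y'}\bigl(D^\beta_y\rho_y(a)_I\bigr)_K=(-1)^{|I||K|}\eps(I,K)\,D^\beta_y\rho_{y+y'}(a)_{I\cup K},
\]
to collapse the nested action into a single $\rho_{y+y'}$ and obtain the bound $\tau_{j,\alpha;\beta}(\chi(a))\leq C\,|a|_{j,\alpha+\beta}$. Without this step your argument only establishes continuity into $\caB_\algA(\gR^{m|n})$, which is strictly weaker than what is claimed.
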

\begin{proof}
Since $\rho$ is a group action and that $\forall z\in\gR^{m|n}$, $\rho_z:(\algA\otimes\superA)\to(\algA\otimes\superA)$ is an algebra-morphism, we deduce that $\chi$ satisfies the axioms of a coaction:
\begin{equation*}
(\text{id}\otimes\Delta)\chi=(\chi\otimes\text{id})\chi,\quad (\text{id}\otimes\eps)\chi=\text{id}.
\end{equation*}
Thus, $\algA^\infty$ is an algebraic $H$-comodule algebra:
\begin{equation}
(\mu_0\otimes\mu)\sigma_{23}(\chi\otimes\chi)=\chi\mu_0,\label{eq-com-axiom}
\end{equation}
where $\mu_0:\algA^\infty\widehat{\otimes}_\pi\algA^\infty\to\algA^\infty$ corresponds to the undeformed product of $\algA$ and $\sigma_{23}$ is the flip of Lemma \ref{lem-com-exchange} for the algebra $\algA^\infty$. Let $a$ be in $\algA^\infty$; we then have $\chi(a)\in\algA^\infty\widehat{\otimes}_\tau\caB(\gR^{m|n})\simeq \caB_{\algA^\infty}(\gR^{m|n})$, so
\begin{equation*}
\tau_{j,\alpha;\beta}(\chi(a))=\sup_{y\in\gR^m}\sum_I|D^\beta\rho_y(a)_I|_{j,\alpha}
= \sup_{y,y'}\sum_{I,J}|D^\alpha_{y'}\rho_{y'}(D^\beta_y \rho_y(a)_I)_J|_j.
\end{equation*}
By using \eqref{eq-udf-gr}, we obtain
\begin{equation*}
\tau_{j,\alpha;\beta}(\chi(a))=\sup_{y,y'}\sum_{I,J}|\eps(I,J)D^\alpha_{y'}D^\beta_y \rho_{y+y'}(a)_{I\cup J}|_j,
\end{equation*}
which shows that there exists $C>0$ such that $\tau_{j,\alpha;\beta}(\chi(a))\leq |a|_{j,\alpha+\beta}$, i.e. $\chi$ is continuous. Note that the flip $\sigma_{23}$ is continuous due to the compatibility of the topological tensor products $\tau$ and $\pi$ showed in Lemma \ref{lem-com-exchange}. Indeed, all the maps involved in Equation \eqref{eq-com-axiom} have to be continuous in order for $\algA^\infty$ to be a comodule-algebra of the Fr\'echet-Hopf algebra $H$.
\end{proof}

Now, the algebra $(\algA^\infty,\mu_0)$ can be deformed by the twist $F$ defined in \eqref{eq-udf-twist} in such a way $(\algA^\infty,\mu_F=\mu_0 F)$ is a Fr\'echet algebra. The universal deformation formula constructed before provides therefore a deformation of the category of the $H$-comodule algebras. Of course, once deformed, there is a priori no reason for $(\algA^\infty,\mu_F)$ to be again an $H$-comodule algebra.
\begin{definition}
\label{def-extsym}
Given a twist $F$ which deforms the category of comodule-algebras $(\algA,\mu_0)$ of a given Fr\'echet-Hopf algebra $H$, we call external symmetries of the twist the Fr\'echet-Hopf algebra $H_F$ for which any deformed algebra $(\algA,\mu_F)$ is an $H_F$-comodule-algebras.
\end{definition}
In the non-graded setting and formally in the deformation parameter, there is a way to obtain the external symmetries $H_F$ from $H$ and the twist $F$ \cite{Drinfeld:1989,Giaquinto:1998}. This has been extended to non-formal deformations of a large class of solvable Lie groups in \cite{Bieliavsky:2010ab}. Let us describe this process for such a Lie group $G$ and where $H$ denotes (a closed subclass of) $\caC^\infty(G)$ with its Hopf algebra structure. If the non-formal twist of $G$ on algebras $\algA$, where $G$ acts by $\rho$, has the form
\begin{equation*}
F=\int_{G\times G}\dd x_1\dd x_2\ e^{-\frac{2i}{\theta}S(x_1,x_2)}A(x_1,x_2)\rho_{x_1}\otimes\rho_{x_2},
\end{equation*}
where $S$ and $A$ are the phase and amplitude of the deformation quantization, then we can consider the left $L$ and right $R$ actions of $G$ on itself to obtain maps $\caC^\infty(G)\hat\otimes \caC^\infty(G)\to \caC^\infty(G)\hat\otimes \caC^\infty(G)$:
\begin{align}
&F_L=\int_{G\times G}\dd x_1\dd x_2\ e^{-\frac{2i}{\theta}S(x_1,x_2)}A(x_1,x_2)R^*_{x_1}\otimes R^*_{x_2},\nonumber\\
&F_R=\int_{G\times G}\dd x_1\dd x_2\ e^{-\frac{2i}{\theta}S(x_1,x_2)}A(x_1,x_2)L^*_{(x_1)^{-1}}\otimes L_{(x_2)^{-1}}.\label{eq-extsymsolv}
\end{align}
To obtain the external symmetries of $F$, the product $\mu$ of $H$ has to be twisted \cite{Bieliavsky:2010ab} into: $\mu_{QG}:= F_L\circ F_R\circ\mu$, which is compatible with the undeformed coproduct $\Delta$. Thus, (a subclass of) $\caC^\infty(G)$ with $\mu_{QG}$ and $\Delta$ is the topological Hopf algebra corresponding to the external symmetries.

In the graded setting, the construction has not been provided in general. However, for the supergroup $\gR^{m|n}$, we can see that the external symmetries of the deformation of $\gR^{m|n}$ are $H=(\caB(\gR^{m|n}),\mu,\Delta)$ without twisting its product.
\begin{proposition}
\label{prop-extsym}
$(\algA^\infty,\mu_F)$ is an $H$-comodule algebra.
\end{proposition}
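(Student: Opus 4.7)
The plan is to verify that the continuous coaction $\chi$ of the preceding proposition remains a comodule-algebra structure once the product on $\algA^\infty$ is deformed from $\mu_0$ to $\mu_F$, with $\tilde\mu$ on $H=\caB(\gR^{m|n})$ left untouched. Since $\chi$, $\sigma_{23}$, $\tilde\mu$, and the seminorms on $\algA^\infty$ do not involve the product on $\algA^\infty$, continuity of $\chi$ and the two pure-coaction axioms $(\text{id}\otimes\Delta)\chi=(\chi\otimes\text{id})\chi$ and $(\text{id}\otimes\eps)\chi=\text{id}$ carry over verbatim from subsection \ref{subsec-com}. What has to be checked is the algebraic identity
\begin{equation*}
(\mu_F\otimes\tilde\mu)\,\sigma_{23}\,(\chi\otimes\chi)=\chi\,\mu_F.
\end{equation*}

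Under the identification $\algA^\infty\widehat\otimes_\tau H\simeq\caB_{\algA^\infty}(\gR^{m|n})$, evaluating both sides at an arbitrary $z\in\gR^{m|n}$ reduces this to the pointwise statement that $\rho_z$ is an algebra automorphism of $(\algA^\infty,\mu_F)$, namely
\begin{equation*}
\rho_z(a\star_\rho b)=\rho_z(a)\star_\rho\rho_z(b),\qquad a,b\in\algA^\infty.
\end{equation*}
I would prove this directly from the twist formula \eqref{eq-udf-twist}. Because $\gR^{m|n}$ is abelian and $\rho$ is a group action, $\rho_z\rho_{z_i}=\rho_{z_i+z}=\rho_{z_i}\rho_z$, so $(\rho_z\otimes\rho_z)$ commutes with the integrand $\rho_{z_1}\otimes\rho_{z_2}$ and hence with $F$. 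Combining this with the fact that each $\rho_z$ is a morphism of the undeformed algebra $(\algA,\mu_0)$, one gets
\begin{equation*}
\rho_z\,\mu_F=\rho_z\,\mu_0\,F=\mu_0\,(\rho_z\otimes\rho_z)\,F=\mu_0\,F\,(\rho_z\otimes\rho_z)=\mu_F\,(\rho_z\otimes\rho_z),
\end{equation*}
which is exactly the required identity.

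There is no serious obstacle: the twist $F$ is built out of the very operators $\rho_{z_i}$ that implement the action, and since the acting group is abelian these mutually commute. This is precisely the mechanism by which the external symmetries remain the undeformed Fr\'echet-Hopf algebra $(\caB(\gR^{m|n}),\tilde\mu)$ and no twisting of $\tilde\mu$ is needed --- in contrast with the general (non-abelian) K\"ahlerian case recalled at the beginning of the subsection.
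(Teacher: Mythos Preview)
Your proof is correct and follows essentially the same approach as the paper: both identify $(\mu_F\otimes\tilde\mu)\sigma_{23}(\chi\otimes\chi)=\chi\mu_F$ as the only nontrivial condition, reduce it pointwise to $\rho_z(a\star_\rho b)=\rho_z(a)\star_\rho\rho_z(b)$, and verify this using that $\rho_z$ is a $\mu_0$-morphism, that $\rho$ is a group action, and that $\gR^{m|n}$ is abelian. The paper writes out the oscillatory integrals explicitly while you phrase the same computation compositionally via $(\rho_z\otimes\rho_z)F=F(\rho_z\otimes\rho_z)$, but the content is identical.
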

\begin{proof}
The only remaining condition to check is $(\mu_F\otimes\mu)\sigma_{23}(\chi\otimes\chi)=\chi\mu_F$. For $a,b\in\algA^\infty$ and $z\in\gR^{m|n}$,
\begin{align*}
&\chi\mu_F(a\otimes b)(z)=\kappa\int\dd z_1\dd z_2 e^{-\frac{2i}{\theta}\omega(z_1,z_2)}\rho_z(\rho_{z_1}(a)\rho_{z_2}(b))\\
&(\mu_F\otimes\mu)\sigma_{23}(\chi\otimes\chi)(a\otimes b)(z)= \kappa\int\dd z_1\dd z_2 e^{-\frac{2i}{\theta}\omega(z_1,z_2)} \rho_{z_1}\rho_z(a)\rho_{z_2}\rho_z(b).
\end{align*}
Since $\rho_z$ is an algebra-morphism, $\rho$ a group action and $\gR^{m|n}$ an Abelian supergroup, we obtain that $\chi\mu_F(a\otimes b)(z)=(\mu_F\otimes\mu)\sigma_{23}(\chi\otimes\chi)(a\otimes b)(z)$.
\end{proof}
Note that $(\caB(\gR^{m|n}),\mu_F=\mu\circ F,\Delta)$ is not a Hopf algebra anymore: the deformed product $\mu_F$ is not compatible with the undeformed coproduct $\Delta$.

\section{Construction of quantum supergroups}

\subsection{Definition of a Fr\'echet quantum supergroup}

In Definition \ref{def-extsym}, we saw that external symmetries of the deformation quantization of actions of a Lie group on Fr\'echet algebras correspond to a deformation of the Fr\'echet-Hopf algebra associated to the Lie group by using \eqref{eq-extsymsolv}. It forms a quantum group.

In the case of $\gR^{m|n}$, we saw in Proposition \ref{prop-extsym} that the external symmetries of the twist $F$ correspond to the group $\gR^{m|n}$ itself (i.e. the undeformed Hopf algebra $H=\caB(\gR^{m|n}$)), because $\gR^{m|n}$ is Abelian. However, to anticipate what could be the external symmetries of a more general supergroup, we have to introduce the new notion of quantum supergroup. Taking into account the nature of external symmetries, we see that this notion has to correspond to a topological graded Hopf algebra, but is not supercommutative in general.
\begin{definition}
\label{def-qusuper}
We define a Fr\'echet quantum supergroup to be a Fr\'echet-Hopf algebra (see Definition \ref{def-frhopf}), for a given topological tensor product, with a $\gZ_2$-grading and for which the algebraic operations - product, unit, coproduct, counit and antipode - respect this grading, i.e. are homogeneous maps of degree 0.
\end{definition}

There exist in the literature other definitions of quantum supergroups, as there are different notions of quantum groups - related to topological Hopf algebras or using deformations of universal enveloping algebras of Lie algebras. In particular, the purely algebraic version of Definition \ref{def-qusuper} corresponds exactly to the notion of quantum supergroup in \cite{Majid:1995}. But here, we place ourselves in the context of topological Hopf (super)-algebras. Note also that we do not assume that the Fr\'echet-Hopf algebra has to be nuclear (see Remark \ref{rmk-nuclear}).

\begin{remark}
In the case of $\gR^{m|n}$, the definition of a supergroup given by Kostant \cite{Kostant:1977} is equivalent to the data of the sheaf $\caC^\infty$ or $\caB$ assuming that $\caC^\infty(\gR^{m|n})$ or $\caB(\gR^{m|n})$ is a $\gZ_2$-graded commutative Fr\'echet-Hopf algebra. We can notice indeed that the conditions in \cite{Kostant:1977} of smoothness on the coproduct and the antipode are equivalent to continuity conditions for the Fr\'echet structure. This is why Definition \ref{def-qusuper} is an extension of Kostant's definition of a supergroup to the quantum level, omitting the supercommutativity condition.
\end{remark}

Following again the analogy with external symmetries of the deformation quantization of $\gR^{m|n}$, we introduce the representations of a Fr\'echet quantum supergroup.
\begin{definition}
A representation of a given Fr\'echet quantum supergroup $H$ is a $\gZ_2$-graded comodule-algebra $\algA$ of $H$ (see Definition \ref{def-frhopf}) such that the continuous coaction $\algA\to \algA\widehat\otimes_{\algA H} H$ is homogeneous of degree 0.
\end{definition}

\subsection{The Clifford algebra}

In this section, we consider the simplest example of Clifford algebra, for which we present the structure of (Fr\'echet) quantum supergroup. The Clifford algebra can be seen as a deformation quantization of the superspace $\gR^{0|n}$: $\forall f_1,f_2\in\caC^\infty(\gR^{0|n})$,
\begin{equation*}
(f_1\star f_2)(\xi)=\kappa\int \dd\xi_1\dd \xi_2\ f_1(\xi_1+\xi)f_2(\xi_2+\xi) e^{-\frac{4i}{\theta}\xi_1\xi_2},
\end{equation*}
for $\xi\in\gR^{0|n}$. The above star-product corresponds actually to \eqref{eq-prod-moy} for $m=0$. We set $H:=\caC^\infty(\gR^{0|n})$ and we can endow it with the norm $\norm f\norm:=\sum_I | f_I|$ for $I$ to be summed over the parts of $\{1,\dots,n\}$. On this finite-dimensional space, any other norm would have been equivalent so that we do not look anymore at the topology of this example. $H$ is associative, with unit $1$. As generators, we take$ e_i:=\xi^i$ with $\xi=(\xi^1,\dots,\xi^n)\in\gR^{0|n}$. Since
\begin{equation*}
 e_i\star e_j= e_ie_j+\frac{i\theta}{4}\delta_{ij},
\end{equation*}
we have the following relations of $Cl(n,\gC)$:
\begin{equation*}
e_i\star e_j+e_j\star e_i=\frac{i\theta}{2}\delta_{ij}.
\end{equation*}
If $\theta=-4i$, we can endow $H$ \cite{Albuquerque:2000qk} with a structure of quantum supergroup:
\begin{itemize}
\item Coproduct: $\Delta(e_i):=e_i\otimes e_i$,
\item Counit: $\eps(e_i):=1$,
\item Antipode: $S(e_i):=e_i$,
\item Product on tensors: $(e_i\otimes e_j)\star (e_k\otimes e_l):=\sigma_{jk} (e_i\star e_k)\otimes (e_j\star e_l)$,
\end{itemize}
with $\sigma_{ij}=1$ if $i\leq j$ and $\sigma_{ij}=-1$ if $i>j$. Note that $\sigma_{ij}$ is a Schur multiplier of the group $\gZ_2^n$ for which the algebra $Cl(n,\gC)$ is $\gZ_2^n$-graded commutative \cite{deGoursac:2008bd}. A corresponding Kac-Takesaki operator would be given by $W(e_i\otimes e_j):=e_i\otimes(e_i\star e_j)$.

\subsection{Examples of solvable Fr\'echet quantum supergroups}

Let us now construct other examples of Fr\'echet quantum supergroups, which are deformation of solvable Lie supergroups. These are consistent extensions of \cite{Rieffel:1992} to the graded setting. We consider a $(1|0)$-dimensional split extension of the symplectic superspace $(\gR^{m|n},\omega)$ of Definition \ref{def-concrete-superspace}. Let indeed $\pi:\gR^{1|0}\to Sp(\gR^{m|n},\omega)$ be a symplectic representation of $\gR^{1|0}$ on $\gR^{m|n}$, homogeneous of degree 0. It can be written as $\pi=\begin{pmatrix} \pi_0 & 0 \\ 0 & \pi_1 \end{pmatrix}$ (square matrix of size $m+n$). We also assume each matrix coefficient of $\pi$ to be smooth with respect to the variable $a\in\gR^{1|0}$. Then, the split extension is of the form $G:=\gR^{1|0}\ltimes_\pi \gR^{m|n}$ with supergroup law:
\begin{equation}
(a,x,\xi)\fois (a',x',\xi')=\left( a+a',\pi_0(a')x+x',\pi_1(a')\xi+\xi'\right).\label{eq-lawsolv}
\end{equation}
Here $a\in\gR^{1|0}$, $x\in\gR^{m|0}$ and $\xi\in\gR^{0|n}$. We use the natural action of $\gR^{m|n}$ on $G$ together with the universal deformation formula of Proposition \ref{prop-udfdirect} to deform the product of functions on $G$ as
\begin{equation}
(f_1\star f_2)(a,x,\xi)=\kappa(a)\int \dd x_1\dd\xi_1\dd x_2\dd \xi_2\ f_1(a,x_1+x,\xi_1+\xi)f_2(a,x_2+x,\xi_2+\xi) e^{-\frac{2i}{a}(\omega_0(x_1,x_2)+2\xi_1\xi_2)}\label{eq-prodsolv}
\end{equation}
with $\kappa(a)=(-1)^{\frac{n(n+1)}{2}}\frac{(ia)^n}{4^n(\pi a)^m}$. Note that we used the extension variable $a$ as the deformation parameter. This will be crucial to define a consistent coproduct. We define $H$ to be the space of smooth superfunctions on $G$ that are bounded with every derivative bounded in the variables $(x,\xi)\in\gR^{m|n}$:
\begin{equation*}
H:=\caC^\infty(\gR^{1|0})\widehat\otimes\caB(\gR^{m|n}).
\end{equation*}
The standard Fr\'echet structure of $H$ is defined by the seminorms
\begin{equation}
|f|_{\alpha,K,\beta} =\sup_{a\in K,\,x\in\gR^m}\{\sum_I|D_a^\alpha D_x^\beta f_I(a,x)|\}\label{eq-seminormsolv}
\end{equation}
for $K$ compact of $\gR=\gB(\gR^{1|0})$, $\alpha\in\gN$ and $\beta\in\gN^m$.

\begin{proposition}
\label{prop-fralgsolv}
Endowed with the star-product \eqref{eq-prodsolv} and the seminorms \eqref{eq-seminormsolv}, $H$ is a unital associative Fr\'echet superalgebra.
\end{proposition}
\begin{proof}
What remains to prove here is the continuity of the star-product \eqref{eq-prodsolv}. Let $f_1,f_2\in H$, $K$ compact of $\gR$, $\alpha\in\gN$ and $\beta\in\gN^m$. First we perform a change of variable: $x_1\mapsto ax_1$ in the expression of $|f_1\star f_2|_{\alpha,K,\beta}$. Then, we can estimate this expression by expanding the superfunctions $f_1$ and $f_2$ along the odd variables in \eqref{eq-prodsolv} and integrate over these odd variables, and also apply operators \eqref{eq-oposcil} inside the integrals. Thus for $k_1,k_2\in\gN$, there exist functions $b_1^\gamma,b_2^\delta\in\caB(\gR^m)$ such that
\begin{multline*}
|f_1\star f_2|_{\alpha,K,\beta}\leq \frac{1}{4^n\pi^m}\sup_{a\in K,\,x\in\gR^m}\sum_{I,J,\gamma,\delta,\tau,\nu,\mu}\int\dd x_1\dd x_2 \frac{1}{(1+x_1^2)^{k_1}(1+x_2^2)^{k_2}} |b_1^\gamma(x_1)b_2^\delta(x_2)|\\
|a|^{|\mu|} |D_a^\tau D^\gamma_x (f_1)_I(a,x+ax_1)| |D_a^\nu D^\delta_x (f_2)_J(a,x+x_2)| 
\end{multline*}
where $I,J$ are summed over $\{1,\dots,n\}$ with some conditions; $\tau,\nu,\mu$ over $\gN$ with $\tau+\nu\leq\alpha$ and $\mu\leq |\gamma|$; $\gamma,\delta$ over $\gN^m$ with $|\gamma|\leq |\beta|+2k_1$ and $|\delta|\leq |\beta|+2k_2$. For an adapted choice of $k_1,k_2$, it means that there exists a constant $C>0$ such that
\begin{equation*}
|f_1\star f_2|_{\alpha,K,\beta}\leq C\sum_{\gamma,\delta,\tau,\nu}|f_1|_{\tau,K,\gamma}|f_2|_{\nu,K,\delta}
\end{equation*}
where the sum is finite. This proves the continuity of the star-product.
\end{proof}

We then consider the coproduct, counit and antipode coming from the (undeformed) supergroup structure of $G$:
\begin{itemize}
\item the coproduct $\Delta:H\to H\widehat\otimes H$ defined by $\Delta f(g,g')=f(g\fois g')$ for $g,g'\in G$ and the supergroup law \eqref{eq-lawsolv},
\item the counit $\eps: H\to\gC$ defined by $\eps(f)=f(0,0,0)$,
\item the antipode $S:H\to H$ defined by $S f(g)=f(g^{-1})$, with $f\in H$ and $(a,x,\xi)^{-1}=(-a,-\pi_0(-a)x,-\pi_1(-a)\xi)$.
\end{itemize}
We note $\mu:H\widehat\otimes H\to H$ the star-product: $\mu(f_1\otimes f_2):=f_1\star f_2$.

\begin{theorem}
\label{thm-solv}
$(H,\mu,1,\Delta,\eps,S)$ is a Fr\'echet quantum supergroup.
\end{theorem}
\begin{proof}
We know from Proposition \ref{prop-fralgsolv} that $(H,\star,1)$ is a Fr\'echet superalgebra. Let us show first that the coproduct is continuous. For $f\in H$, $(a,x,\xi),(a',x',\xi')\in G$, the coproduct takes the form
\begin{equation*}
\Delta(f)(a,x,\xi,a',x',\xi')=\sum_{I,J,L}\eps(I,J) f_{I\cup J}(a+a',\pi_0(a')x+x')(\pi_1(a'))_{IL}\xi^L(\xi')^J
\end{equation*}
with some constraints on $I,J,L$, and $\eps(I,J)$ given by \eqref{eq-eps}. Then,
\begin{align*}
|\Delta(f)|_{\alpha,K,\beta;\alpha',K',\beta'}&\leq \sup_{a\in K,\,a'\in K',\,x,x'\in\gR^m}\sum_{I,J,L} |D_a^\alpha D_{a'}^{\alpha'}D_x^\beta D_{x'}^{\beta'} f_{I\cup J}(a+a',\pi_0(a')x+x') (\pi_1(a'))_{IL}|\\
&\leq C\sum_{\tau,\gamma}|f|_{\tau,K'',\gamma}
\end{align*}
where $K''$ is a compact containing $\{a+a',\ a\in K,\ a'\in K'\}$, $\tau\leq\alpha+\alpha'$, $|\gamma|\leq|\beta|+|\beta'|$, and $C$ a constant depending in particular on the smooth matrix coefficients of $\pi$ and their derivatives. This proves that $\Delta$ is continuous. In the same way, the counit $\eps$ and the antipode $S$ are continuous.

Let us show that $\Delta$ is an algebra morphism for the star-product. For $f_1,f_2\in H$, we have
\begin{multline*}
\Delta(f_1\star f_2)(a,x,\xi,a',x',\xi')=\kappa(a+a')\int \dd x_1\dd\xi_1\dd x_2\dd \xi_2\ f_1\big(a+a',x_1+\pi_0(a')x+x',\xi_1+\pi_1(a')\xi+\xi'\big)\\
f_2\big(a+a',x_2+\pi_0(a')x+x',\xi_2+\pi_1(a')\xi+\xi'\big) e^{-\frac{2i}{a+a'}(\omega_0(x_1,x_2)+2\xi_1\xi_2)}
\end{multline*}
Besides,
\begin{multline*}
\Delta(f_1)\star \Delta(f_2)(a,x,\xi,a',x',\xi')=\kappa(a)\kappa(a')\int \dd x_1\dd\xi_1\dd x_2\dd \xi_2\dd x_1'\dd\xi_1'\dd x_2'\dd \xi_2'\\
 f_1\big(a+a',\pi_0(a')(x_1+x)+x_1'+x',\pi_1(a')(\xi_1+\xi)+\xi_1'+\xi'\big) e^{-\frac{2i}{a}(\omega_0(x_1,x_2)+2\xi_1\xi_2)}\\
f_2\big(a+a',\pi_0(a')(x_2+x)+x_2'+x',\pi_1(a')(\xi_2+\xi)+\xi_2'+\xi'\big)e^{-\frac{2i}{a'}(\omega_0(x'_1,x'_2)+2\xi'_1\xi'_2)}.
\end{multline*}
The sign of the star-product of elements of $H\widehat\otimes H$ coming from the flip \eqref{eq-flip} has been taken into account. We perform the change of variables: $x_i''=x_i'+\pi_0(a')x_i$, $\xi_i''=\xi_i'+\pi_1(a')\xi_i$. By using the identity $\int\dd\xi\ e^{c\xi\xi'}=(-1)^{\frac{n(n-1)}{2}}c^n(\xi')^{\{1,\dots,n\}}$, we can integrate over $x_1,\xi_1$, obtaining
\begin{multline*}
\Delta(f_1)\star \Delta(f_2)(a,x,\xi,a',x',\xi')=(-4i)^n (-1)^{\frac{n(n+1)}{2}}\pi^m\kappa(a)\kappa(a')\int \dd x_2\dd \xi_2\dd x_1''\dd\xi_1''\dd x_2''\dd \xi_2''\\ \delta\Big(\frac{a+a'}{aa'}x_2-\frac{1}{a'}\pi_0(a')^*x_2''\Big) \Big(\frac{a+a'}{aa'}\xi_2-\frac{1}{a'}\pi_1(a')^*\xi_2''\Big)^{\{1,\dots,n\}}
 f_1\big(a+a',x_1''+\pi_0(a')x+x',\xi_1''\pi_1(a')\xi+\xi'\big)\\
f_2\big(a+a',x''_2+\pi_0(a')x+x',\xi_2''+\pi_1(a')\xi)+\xi'\big)e^{-\frac{2i}{a'}(\omega_0(x''_1,x''_2-\pi_0(a')x_2)+2\xi''_1(\xi''_2-\pi_1(a')\xi_2))}.
\end{multline*}
In the previous step, we used the fact that $\pi$ is a symplectic representation, i.e. $\omega(\pi_0(a)x,\pi_0(a)y)=\omega_0(x,y)$ and $(\pi_1(a)\xi)(\pi_1(a)\eta)=\xi\eta$. Moreover we denote $\pi_0(a)^*:=\omega_0^{-1}\pi_0(a)^T\omega_0$ and $\pi_1(a)^*:=\pi_1(a)^T$. If we now perform the Dirac integration on $x_2,\xi_2$, we obtain
\begin{equation*}
\Delta(f_1)\star \Delta(f_2)(a,x,\xi,a',x',\xi')=\Delta(f_1\star f_2)(a,x,\xi,a',x',\xi').
\end{equation*}
All the other algebraic identities are the same as in the undeformed case except $\mu(\text{id}\otimes S)\Delta= 1\otimes\eps=\mu(S\otimes\text{id})\Delta$. For this, we compute
\begin{multline*}
\mu(\text{id}\otimes S)\Delta(f)(a,x,\xi)=\\
\kappa(a)\int \dd x_1\dd\xi_1\dd x_2\dd \xi_2\ f(0,\pi_0(-a)(x_1-x_2),\pi_1(-a)(\xi_1-\xi_2)) e^{-\frac{2i}{a}(\omega_0(x_1,x_2)+2\xi_1\xi_2)}=\eps(f).
\end{multline*}
\end{proof}

We can now exhibit the analog of Kac-Takesaki operator $W: H\widehat\otimes H\to H\widehat\otimes H$ associated to this quantum supergroup, also called multiplicative unitary in the non-graded context. It is defined \cite{Baaj:1993,Woronowicz:1996} as $\forall a,b\in H$,
\begin{equation}
W(a\otimes b):= (\Delta a)\star (1\otimes b)= a_{(1)}\otimes (a_{(2)}\star b),\label{eq-defmultunit}
\end{equation}
in the Sweedler notations of the coproduct. Its explicit expression is given by $\forall f\in H\widehat\otimes H$,
\begin{multline}
W(f)(a,x,\xi,a',x',\xi')=\kappa(a')\int \dd x_1\dd\xi_1\dd x_2\dd \xi_2\ e^{-\frac{2i}{a'}(\omega_0(x_1,x_2)+2\xi_1\xi_2)}\\
 f\big(a+a',x_1+\pi_0(a')x+x',\xi_1+\pi_1(a')\xi+\xi',a',x_2+x',\xi_2+\xi'\big).\label{eq-multunitsolv}
\end{multline}
\begin{proposition}
\label{prop-multunitsolv}
The Kac-Takesaki operator \eqref{eq-multunitsolv} is a continuous operator $W: H\widehat\otimes H\to H\widehat\otimes H$ homogeneous of degree 0, and it satisfies the pentagonal relation
\begin{equation*}
W_{12}W_{13}W_{23}=W_{23}W_{12}.
\end{equation*}
\end{proposition}
\begin{proof}
Indeed, as $W=(\mu\otimes\mu)\sigma_{23}(\Delta\otimes 1\otimes\text{id})$, it is continuous. To prove the pentagonal relation where involved signs are different from the non-graded case, we use Sweedler notations for the coproduct since its coassociativity has been showed in Theorem \ref{thm-solv}. On the left side,
\begin{equation*}
W_{12}W_{13}W_{23}(a\otimes b\otimes c)= (-1)^{|a_{(3)}||b_{(1)}|} a_{(1)}\otimes (a_{(2)}\star b_{(1)})\otimes (a_{(3)}\star b_{(2)}\star c),
\end{equation*}
where the sign appears because of the action of $W_{13}=(\mu\otimes\text{id}\otimes\mu)\sigma_{24}(\Delta\otimes\text{id}\otimes 1\otimes\text{id})$ and with
\begin{equation*}
\sigma_{24}(a_1\otimes a_2\otimes a_3\otimes a_4\otimes a_5)=(-1)^{|a_2|(|a_3|+|a_4|)+|a_3||a_4|} a_1\otimes a_4\otimes a_3\otimes a_2\otimes a_5.
\end{equation*}
On the right side,
\begin{equation*}
W_{23}W_{12}(a\otimes b\otimes c)= (-1)^{|a_{(3)}||b_{(1)}|} a_{(1)}\otimes (a_{(2)}\star b_{(1)})\otimes (a_{(3)}\star b_{(2)}\star c),
\end{equation*}
where we used $\Delta(a\star b)=\Delta(a)\star\Delta(b)=(-1)^{|a_{(2)}||b_{(1)}|}(a_{(1)}\star b_{(1)})\otimes (a_{(2)}\star b_{(2)})$ due to Theorem \ref{thm-solv}.
\end{proof}

\begin{remark}
\label{rmk-superunit}
For the Lebesgue-Berezin measure on $\gR^{m|n}$, we can define a ``natural'' superhermitian (not positive definite) scalar product
\begin{equation*}
\langle f_1,f_2\rangle:=\int \dd x\dd\xi\ \overline{f_1(x,\xi)} f_2(x,\xi)
\end{equation*}
and a hermitian positive definite one $\big(f_1,f_2\big):=\langle f_1,\ast f_2\rangle$ by using the Hodge operation
\begin{equation*}
\ast \sum_I f_I(x)\xi^I:=\sum_I \eps(I,\complement I)f_I(x)\xi^{\complement I}.
\end{equation*}
Taking into account the right-invariant measure
\begin{equation*}
\dd^R(a,x,\xi)=\frac{1}{\text{sdet}(\pi(a))}\dd (a,x,\xi)=\frac{\det \pi_1(a)}{\det\pi_0(a)}\dd a\dd x\dd\xi
\end{equation*}
on $G$, it is a straightforward computation using \eqref{eq-multunitsolv} to check that
\begin{multline*}
\int\dd^R(a,x,\xi)\dd^R(a',x',\xi')\ \overline{W(f_1)(a,x,\xi,a',x',\xi')} W(f_2)(a,x,\xi,a',x',\xi')\\
=\int\dd^R(a,x,\xi)\dd^R(a',x',\xi')\ \overline{f_1(a,x,\xi,a',x',\xi')} f_2(a,x,\xi,a',x',\xi')
\end{multline*}
for $f_1,f_2\in (H\widehat\otimes H)\cap L^2(G\times G)$. This means that the operator $W$ is superunitary for the superhermitian scalar product associated to $L^2(G\times G,\dd^Rg\otimes\dd^Rg')$
\begin{equation*}
\langle W(f_1),W(f_2)\rangle=\langle f_1,f_2\rangle,
\end{equation*}
which is not true for the positive definite scalar product $\big(\fois,\fois\big)$. $W$ is a ``multiplicative superunitary'' rather than a multiplicative unitary.
\end{remark}

\subsection{Fr\'echet quantum supergroups with supertoral subgroups}

In this section, we will follow the philosophy of \cite{Rieffel:1993qg} to construct deformation of compact Lie supergroups with supertoral subgroups. Let $G$ be a compact connected Lie supergroup (i.e. its body $\gB G$ is compact connected) with $\Gamma:=\gT^{m|n}$ a supertoral subgroup of $G$. We assume that $m$ is even so that the symplectic superspace $(\gR^{m|n},\omega)$ (see Definition \ref{def-concrete-superspace}) is the Lie algebra of $\Gamma$. We note
\begin{equation*}
z=(x,\xi)\in\gR^{m|n}\quad\mapsto \quad e^{z}=e^{(x,\xi)}\in G
\end{equation*}
the exponential restricted to this Lie algebra. Note that $\caC^\infty(G)\simeq \caC^\infty(\gB G)\otimes\bigwedge\gR^n$ is a Fr\'echet superalgebra for the supercommutative pointwise product (see below Definition \ref{def-concrete-superman}) and the seminorms
\begin{equation}
|f|_{\alpha,K} =\sup_{g\in K,\,|\nu|\leq\alpha}\{\sum_I|D^\nu f_I(g)|\},\label{eq-seminormsubtore}
\end{equation}
for $K$ compact subset of a coordinate chart of $\gB G$, $\alpha\in\gN$ and $D^\nu$ a multi-derivation of order $|\nu|$ for even coordinates. The action $\rho:\gR^{m|n}\times\gR^{m|n}\times\caC^\infty(G)\to \caC^\infty(G)$, defined by
\begin{equation*}
\forall z,z'\in\gR^{m|n},\quad \forall g\in G\quad:\quad\rho_{(z,z')}f(g):=f\big(e^{-z}ge^{z'}\big),
\end{equation*}
allows to deform the pointwise product into the star-product
\begin{equation}
(f_1\star f_2)(g)=\kappa^2\int \dd z_1\dd z_3\dd z_2\dd z_4\ f_1(e^{-z_1}g e^{z_3})f_2(e^{-z_2}ge^{z_4}) e^{-\frac{2i}{\theta}(\omega(z_1,z_2)-\omega(z_3,z_4))},\label{eq-prodsubtore}
\end{equation}
with $\kappa=(-1)^{\frac{n(n+1)}{2}}\frac{(i\theta)^n}{4^n(\pi \theta)^m}$, for $g\in G$ and for any $f_1,f_2\in\caC^\infty(G)$. Note that the underlying symplectic space is $(\gR^{m|n},\omega)\oplus(\gR^{m|n},-\omega)$ where the minus sign, which can also be found in the phase of the star-product, will be crucial. We note $H:=\caC^\infty(G)$.

\begin{proposition}
\label{prop-fralgsubtore}
Endowed with the star-product \eqref{eq-prodsubtore} and the seminorms \eqref{eq-seminormsubtore}, $H$ is a unital associative Fr\'echet superalgebra.
\end{proposition}
\begin{proof}
Associativity is a consequence of the universal deformation formula. Let us check that the star-product is continuous. Then, we use the same method as in the proof of Proposition \ref{prop-fralgsolv} and we get that for $k_i\in\gN$, there exist functions $b_i^{\gamma_i}\in\caB(\gR^m)$ and a constant $C>0$ (depending on $\theta$) such that
\begin{multline*}
|f_1\star f_2|_{\alpha,K}\leq C\sup_{g\in \gB K}\sum_{I,J,\gamma_i,\nu_i}\int\dd x_1\dd x_3\dd x_2\dd x_4 \frac{|c_{\nu_1,\nu_2}|}{(1+x_1^2)^{k_1}(1+x_2^2)^{k_2}(1+x_3^2)^{k_3}(1+x_4^2)^{k_4}}\\
 |b_1^{\gamma_1}(x_1)b_2^{\gamma_2}(x_2)b_3^{\gamma_3}(x_3)b_4^{\gamma_4}(x_4)|
 |D_g^{\nu_1} D_{x_1}^{\gamma_1} D_{x_3}^{\gamma_3} ((f_1)_I(e^{-\theta x_1}g e^{x_3}))| |D_g^{\nu_2} D^{\gamma_2}_{x_2}D^{\gamma_4}_{x_4} ((f_2)_J(e^{-\theta x_2}g e^{x_4}))| 
\end{multline*}
where $I,J$ are summed over $\{1,\dots,n\}$ with some conditions; and $\nu_i,\gamma_i$  are such that $\nu_1+\nu_2\leq\alpha$ and $|\gamma_i|\leq 2k_i$. It follows that there exists a constant $C'>0$ and a compact $K'$ of $\gB G$ containing $\{\gB(e^{-z_1}ge^{z_2}),\ g\in K,\ z_i\in\Gamma\}$ such that
\begin{equation*}
|f_1\star f_2|_{\alpha,K}\leq C'\sum_{\tau,\nu}|f_1|_{\tau,K'}|f_2|_{\nu,K'}
\end{equation*}
where the sum is finite. Therefore, the star-product is continuous.
\end{proof}

Let us endow $H$ with the following (undeformed) operations:
\begin{itemize}
\item the coproduct $\Delta:H\to H\widehat\otimes H$ defined by $\Delta f(g,g')=f(g\fois g')$ for $g,g'\in G$,
\item the counit $\eps: H\to\gC$ defined by $\eps(f)=f(e_G)$, with $e_G$ the neutral element of $G$,
\item the antipode $S:H\to H$ defined by $S f(g)=f(g^{-1})$, with $f\in H$.
\end{itemize}
We denote by $\mu:H\widehat\otimes H\to H$ the star-product: $\mu(f_1\otimes f_2):=f_1\star f_2$.

\vspace*{0.3cm}

\begin{theorem}
\label{thm-subtore}
$(H,\mu,1,\Delta,\eps,S)$ is a Fr\'echet quantum supergroup.
\end{theorem}
\begin{proof}
First, we check the compatibility between the coproduct and the product. Set $f_1,f_2\in H$.
\begin{equation*}
\Delta(f_1\star f_2)(g,g')=\kappa^2\int \dd z_1\dd z_3\dd z_2\dd z_4\ f_1(e^{-z_1}gg' e^{z_3})
f_2(e^{-z_2}gg'e^{z_4}) e^{-\frac{2i}{\theta}(\omega(z_1,z_2)-\omega(z_3,z_4))}.
\end{equation*}
Then, as in the previous section, we want to compute
\begin{multline*}
\Delta(f_1)\star \Delta(f_2)(g,g')=\kappa^4\int \dd z_1\dd z_3\dd z_2\dd z_4\dd z'_1\dd z'_3\dd z'_2\dd z'_4\ f_1(e^{-z_1}g e^{z_3-z'_1}g'e^{z'_3})\\
f_2(e^{-z_2}ge^{z_4-z'_2}g'e^{z'_4}) e^{-\frac{2i}{\theta}(\omega(z_1,z_2)-\omega(z_3,z_4)+\omega(z'_1,z'_2)-\omega(z'_3,z'_4))}.
\end{multline*}
For this, we change the variables $z''_3=z_3-z'_1$, $z''_4=z_4-z'_2$ and we perform the integration on $z'_1,z'_2$. After simplification, it gives the compatibility:
\begin{equation*}
\Delta(f_1)\star \Delta(f_2)(g,g')=\Delta(f_1\star f_2)(g,g').
\end{equation*}
Let us show for example the identity $\mu(\text{id}\otimes S)\Delta=1\otimes\eps$. Indeed,
\begin{multline*}
\mu(\text{id}\otimes S)\Delta(f)(g)=\kappa^2\int \dd z_1\dd z_3\dd z_2\dd z_4\ f\big(e^{-z_1}g e^{z_3}(e^{-z_2}ge^{z_4})^{-1}\big) e^{-\frac{2i}{\theta}(\omega(z_1,z_2)-\omega(z_3,z_4))}\\
=\kappa (-1)^n\int\dd z_1\dd z_2 f(e^{-z_1}gg^{-1}e^{z_2})e^{-\frac{2i}{\theta}\omega(z_1,z_2)}=f(e_G).
\end{multline*}
For the continuity of the coproduct, we need to choose a global odd coordinate system $\{\eta\}$ on $G$ since it is a trivial supermanifold (see Definition \ref{def-concrete-superman}). Then, the coproduct can be expressed as
\begin{equation*}
\Delta(f)(g,g')=f(g\fois g')=\sum_{I,J,L} c_{I,J,L} f_L((\gB g)(\gB g'))\eta^I(\eta')^J
\end{equation*}
if $g=(\gB g,\eta)$, $g'=(\gB g',\eta')$, and by denoting $c_{I,J,L}$ some coefficients related to the group law of $G$ and independent of $f$. Thus, we have the estimate
\begin{equation*}
|\Delta(f)|_{\alpha,K;\alpha',K'}\leq \sup_{g\in K,\,g'\in K',\,|\nu|\leq\alpha,\,|\nu'|\leq\alpha'} \sum_{I,J,L} |c_{I,J,L}| |D_g^\nu D_{g'}^{\nu'} f_L(g\fois g')| \leq C\sum_{\tau} |f|_{\tau,K''},
\end{equation*}
where $K''$ is a compact subset of $\gB G$ containing $\{g\fois g',\ g\in K,\ g'\in K'\}$, $\tau\leq\alpha+\alpha'$, and $C$ a constant depending in particular on $c_{I,J,L}$ and on the (smooth) modular function of $G$ and its derivatives. So, the coproduct is continuous, as well as the other operations.
\end{proof}

\begin{proposition}
The subgroup $\Gamma\subset G$ is not deformed in $H$. This means that $\Gamma=\gT^{m|n}$ is a subgroup of the quantum supergroup $(H,\mu,1,\Delta,\eps,S)$.
\end{proposition}
\begin{proof}
The coproduct is indeed not deformed. For the product, we can see that $\forall g\in\Gamma$, $\forall f_1,f_2\in H$,
\begin{equation*}
(f_1\star f_2)(g)=\kappa^2\int \dd z_1\dd z_3\dd z_2\dd z_4\ f_1(g e^{z_3-z_1})f_2(ge^{z_4-z_2}) e^{-\frac{2i}{\theta}(\omega(z_1,z_2)-\omega(z_3,z_4))}
\end{equation*}
since $\Gamma$ is Abelian. By performing the change of variables $z_1\mapsto z_1+z_3$, $z_2\mapsto z_2+z_4$ and integrating over $z_3,z_4$, we find $(f_1\star f_2)(g)=f_1(g)f_2(g)$. So, $\caC^\infty(\Gamma)$ is not deformed in $H$.
\end{proof}

The analog of Kac-Takesaki operator, defined in \eqref{eq-defmultunit}, has in this context the expression
\begin{equation}
W(f)(g,g')=\kappa^2\int \dd z_1\dd z_3\dd z_2\dd z_4\ f(e^{-z_1}gg' e^{z_3},e^{-z_2}g'e^{z_4}) e^{-\frac{2i}{\theta}(\omega(z_1,z_2)-\omega(z_3,z_4))},\label{eq-multunitsubtore}
\end{equation}
for $f\in H\widehat\otimes H$. As in Proposition \ref{prop-multunitsolv}, we can show that it is a continuous operator $W:H\widehat\otimes H\to H\widehat\otimes H$ homogeneous of degree 0 and that it satisfies the pentagonal equation
\begin{equation*}
W_{12}W_{13}W_{23}=W_{23}W_{12}.
\end{equation*}
Moreover if $G$ is unimodular, a computation analog as in Remark \ref{rmk-superunit} proves that $W$ is superunitary for the superhermitian scalar product canonically associated to $L^2(G\times G)$:
$\forall f_1,f_2\in (H\widehat\otimes H)\cap L^2(G\times G)$,
\begin{equation*}
\langle W(f_1),W(f_2)\rangle=\langle f_1,f_2\rangle.
\end{equation*}

\medskip

We finally give an explicit example of Fr\'echet quantum supergroup with a supertoral subgroup. For this, we will present the special linear supergroup in low dimension. We need to recall what supermatrices are.
\begin{definition}
\label{def-concrete-supermatrix}
A square supermatrix $A$ of size $(m|n)$ is a matrix with coefficients in $\superA$ (see section \ref{subsec-concrete}) and of the form
\begin{equation*}
A=\begin{pmatrix} A_{00} & A_{01} \\ A_{10} & A_{11} \end{pmatrix}
\end{equation*}
where $A_{00}$ is an $m\times m$ matrix with entries in $\superA_0$, $A_{01}$ is an $m\times n$ matrix with entries in $\superA_1$, $A_{10}$ is an $n\times m$ matrix with entries in $\superA_1$, and $A_{11}$ is an $n\times n$ matrix with entries in $\superA_0$.
\end{definition}
The set of square supermatrices of size $(m|n)$ is a superalgebra for the standard addition and multiplication. We denote by $GL(m|n)$ the supergroup of invertible square supermatrices of size $(m|n)$. Finally, we define the Berezinian (or superdeterminant) of a supermatrix $A$ by:
\begin{equation*}
\text{Ber}(A)=\det(A_{00}-A_{01} A_{11}^{-1}A_{10})\det(A_{11}^{-1}).
\end{equation*}
Now the special linear supergroup is defined as
\begin{equation*}
SL(m|n)\,:=\, \{\,A\in GL(m|n),\quad \text{Ber}(A)=1\}.
\end{equation*}
Restricted to dimension $m|n=1|1$, this supergroup contains the elements
\begin{equation*}
g=\begin{pmatrix} a & \beta \\ \gamma & d  \end{pmatrix}
\end{equation*}
with $a,d\in\superA_0$, $\beta,\gamma\in\superA_1$ such that $a=d+\frac{1}{d}\beta\gamma$. We see directly that $SL(1|1)$ contains two supertoral subgroups generated by $\beta\in\gT^{0|1}$ and $\gamma\in\gT^{0|1}$. We can choose for example to consider the deformation using the supertorus generated by $\beta$, and we want to see if this deformation is not trivial. For this, we compute the explicit expression of the star-product \eqref{eq-prodsubtore}: for any $f_1,f_2\in\caC^\infty(SL(1|1))$,
\begin{multline*}
(f_1\star f_2)(g)=f_1(g)f_2(g)+\frac{i\theta}{2}a\gamma (\partial_\beta f_1(g)\partial_d f_2(g)-\partial_d f_1(g)\partial_\beta f_2(g))\\
+\frac{i\theta}{2}d\gamma (\partial_\beta f_1(g)\partial_a f_2(g)-\partial_a f_1(g)\partial_\beta f_2(g))+\frac{i\theta}{2}(d^2-a^2)\partial_\beta f_1(g)\partial_\beta f_2(g).
\end{multline*}
We see that already by taking a supertoral subgroup of dimension $0|1$ we can produce a non-trivial Fr\'echet quantum supergroup, deformation of $SL(1|1)$. Note that this associative star-product stops at the finite level $\theta$ because only odd variables are involved in the deformation. This is a simple example that shows how such a construction can be useful in concrete cases. Of course, it applies on a large class of supergroups for which explicit expressions can be much more complicated.

\vskip 0.5 cm

\noindent {\bf Acknowledgements}: The author thanks Pierre Bieliavsky, Philippe Bonneau and Gijs Tuynman for interesting discussions on this work.


\begin{thebibliography}{50} 

\bibitem{Albuquerque:2000qk}
H.~Albuquerque and S.~Majid, ``{Clifford algebras obtained by twisting of group
  algebras},'' {\em J. Pure Appl. Algebra} {\bf 171} (2002)  133--148.

\bibitem{Artin:1994}
M.~Artin and J.~J. Zhang, ``{Noncommutative projective schemes},'' {\em Adv.
  Math.} {\bf 109} (1994)  228--287.
  
\bibitem{Baaj:1993}
S.~Baaj and G.~Skandalis, ``{Unitaires multiplicatifs et dualit\'e pour les
  produits crois\'es de C*-alg\`ebres},'' {\em Ann. Scient. E.N.S.} {\bf 26}
  (1993)  425--488.

\bibitem{Bayen:1978}
F.~Bayen, M.~Flato, C.~Fronsdal, A.~Lichnerowicz, and D.~Sternheimer,
  ``{Deformation theory and quantization},'' {\em Ann. Phys.} {\bf 11} (1978)
  61--151.
  
\bibitem{Bieliavsky:2010su}
P.~Bieliavsky, A.~de~Goursac, and G.~Tuynman, ``{Deformation quantization for
  Heisenberg supergroup},''
  \href{http://dx.doi.org/10.1016/j.jfa.2012.05.002}{{\em J. Funct. Anal.} {\bf
  263} (2012)  549--603}.
  
  \bibitem{Bieliavsky:2010kg}
P.~Bieliavsky and V.~Gayral, ``{Deformation Quantization for Actions of
  Kahlerian Lie Groups},'' {\em to appear in Mem. Amer. Math. Soc.}  ,
\href{http://arxiv.org/abs/1109.3419}{{\tt arXiv:1109.3419 [math.OA]}}.

\bibitem{Bieliavsky:2013sk}
P.~Bieliavsky, V.~Gayral, A.~de~Goursac, and F.~Spinnler, ``{Harmonic analysis
  on homogeneous complex bounded domains and noncommutative geometry},'' 
  in {\it Developments and retrospectives in Lie theory: geometric
  and analytic methods} (ed: G. Mason, I. Penkov, J. Wolf), Developments in Mathematics 37, Springer 2014,
\href{http://arxiv.org/abs/1311.1871}{{\tt arXiv:1311.1871 [math.FA]}}.

\bibitem{Bieliavsky:2010ab}
P.~Bieliavsky, P.~Bonneau, F.~D'Andrea, V.~Gayral, Y.~Maeda, and Y.~Voglaire,
  ``{Non-formal Drinfeld Twists for K\"ahlerian Lie Groups},'' {\em in
  progress}.

\bibitem{Bonneau:2003vb}
P.~Bonneau and D.~Sternheimer, ``{Topological Hopf algebras, quantum groups and
  deformation quantization},'' {\em Proceedings of the conference Hopf algebras
  in noncommutative geometry and physics (VUB, Brussels, May 2002)}.

\bibitem{Connes:1994}
A.~Connes, {\em {Noncommutative Geometry}}.
\newblock Academic Press, San Diego, New York, London, 1994.

\bibitem{DeWitt:1984}
B.~DeWitt, {\em {Supermanifolds}}.
\newblock Cambridge UP, 1984.

\bibitem{Drinfeld:1989}
V.~G. Drinfeld, ``{Quasi-Hopf algebras},'' {\em Leningrad Math. J.} {\bf 1}
  (1989)  1419--1457.

\bibitem{Giaquinto:1998}
A.~Giaquinto and J.~Zhang, ``{Bialgebra actions, twists, and universal
  deformation formulas},'' {\em J. Pure Appl. Algebra} {\bf 128} (1998)  133.
  
\bibitem{deGoursac:2007gq}
A.~de~Goursac, J.-C. Wallet, and R.~Wulkenhaar, ``{Noncommutative induced gauge
  theory},'' \href{http://dx.doi.org/10.1140/epjc/s10052-007-0335-2}{{\em Eur.
  Phys. J.} {\bf C51} (2007)  977--987}.

\bibitem{deGoursac:2010zb}
A.~de~Goursac, ``{On the origin of the harmonic term in noncommutative quantum
  field theory},'' \href{http://dx.doi.org/10.3842/SIGMA.2010.048}{{\em SIGMA}
  {\bf 6} (2010)  048}.
  
\bibitem{deGoursac:2008bd}
A.~de~Goursac, T.~Masson, and J.-C. Wallet, ``{Noncommutative
  $\varepsilon$-graded connections},'' {\em J. Noncommut. Geom.} {\bf 6} (2012)
   343--387.

\bibitem{Grothendieck:1966}
A.~Grothendieck, {\em {Produits tensoriels topologiques et espaces
  nucl\'eaires}}.
\newblock Mem. Amer. Math. Soc. {\bf 16}, 1955.

\bibitem{Inoue:1991}
A.~Inoue and Y.~Maeda, ``{Foundations of calculus on super Euclidean space
  $\mathbb R^{m|n}$ based on a Frechet-Grassmann algebra},'' {\em Kodai Math.
  J.} {\bf 14} (1991)  72--112.

\bibitem{Inoue:2003}
A.~Inoue and Y.~Maeda, ``{On a construction of a good parametrix for the Pauli
  equations by Hamiltonian path-integral method, An application of
  superanalysis},'' {\em Japanese J. Math.} {\bf 29} (2003)  27--107.

\bibitem{Kontsevich:2003}
M.~Kontsevich, ``{Deformation quantization of Poisson manifolds},''
{\em Lett. Math. Phys.} {\bf 66} (2003)  157--216.

\bibitem{Kostant:1977}
B.~Kostant, {\em {Graded manifolds, graded Lie theory and prequantization}}.
\newblock Lecture Notes in Mathematics 570 (Springer), 1977.

\bibitem{Majid:1995}
S.~Majid, {\em {Foundations of quantum group theory}}.
\newblock Cambridge Univ. Press, 1995.

\bibitem{Rieffel:1981}
M.~A. Rieffel, ``{C*-algebras associated with irrational rotations},'' {\em
  Pacif. J. Math.} {\bf 93} (1981)  415--429.
  
\bibitem{Rieffel:1989}
M.~A. Rieffel, ``{Deformation Quantization of Heisenberg Manifolds},'' {\em
  Commun. Math. Phys.} {\bf 122} (1989)  531--562.
  
\bibitem{Rieffel:1992}
M.~A. Rieffel, ``{Some solvable quantum groups},'' {\em Proceedings OATE2
  Conference (Longman, Burnt Mill), Pitman Research Notes Math.} {\bf 270}
  (1992)  146--159.
  
\bibitem{Rieffel:1993qg}
M.~A. Rieffel, ``{Compact Quantum Groups associated with Toral Subgroups},''
  {\em Contemp. Math.} {\bf 145} (1993)  465--491.

\bibitem{Rogers:2007}
A.~Rogers, {\em {Supermanifolds, Theory and Applications}}.
\newblock World Scientific Publishing, 2007.

\bibitem{Schwarz:2000}
A.~Schwarz, ``{Noncommutative Supergeometry and Duality},'' {\em Lett. Math.
  Phys.} {\bf 50} (1999)  309.
  
\bibitem{Tuynman:2005}
G.~M. Tuynman, {\em {Supermanifolds and Supergroups}}.
\newblock Kluwer Academic Publishers, 2005.

\bibitem{Voigt:2008}
C.~Voigt, ``{Bornological quantum groups},'' {\em Pacific J. Math.} {\bf 235}
  (2008)  93--135.

\bibitem{Woronowicz:1987}
S.~L. Woronowicz, ``{Compact matrix pseudogroups},'' {\em Commun. Math. Phys.}
  {\bf 111} (1987)  613.

\bibitem{Woronowicz:1996}
S.~L. Woronowicz, ``{From multiplicative unitarities to quantum groups},'' {\em
  Int. J. Math.} {\bf 7} (1996)  127.


\end{thebibliography}
\end{document}